\theoremstyle{definition}
\newtheorem{theorem}{Theorem}[section]
\newtheorem{proposition}{Proposition}[section]
\newtheorem{remark}{Remark}[section]
\newtheorem{example}{Example}[section]
\newtheorem{definition}{Definition}[section]
\newtheorem{corollary}{Corollary}[section]
\newtheorem{lemma}{Lemma}[section]
\newtheorem{question}{Question}[section]
\begin{document}

\title{Symplectic Automorphisms and the Picard Group of a K3 Surface}
\author{Ursula Whitcher \thanks{I thank the referee for comments which improved the exposition of the paper, Alice Garbagnati, Paul Hacking, and Kenji Hashimoto for useful discussion, Charles Doran for his generous guidance, and the organizers of the 2007 GAeL conference in Istanbul, where I presented an early version of this work, for their support.  Further, I gratefully acknowledge partial support in the preparation of this work by the National Science Foundation under the Grant DMS-083996.}}
\date{}
\maketitle

\section{Introduction}

Let $X$ be a K3 surface, and let $G$ be a finite group acting on $X$ by automorphisms.  
The action of $G$ on $X$ induces an action on the cohomology of $X$.  We assume $G$ acts symplectically: that is, $G$ acts as the identity on $H^{2,0}(X)$.  In this case, the minimum resolution $Y$ of the quotient $X/G$ is itself a K3 surface.

In \cite{Nikulin}, Nikulin classified the finite abelian groups which act symplectically on K3 surfaces by analyzing the relationship between $X$ and $Y$.  Nikulin also described moduli spaces of K3 surfaces with $G$ actions for the case that $G$ is an abelian group; these topological spaces are subspaces of the moduli space of marked K3 surfaces.  Mukai showed in \cite{Mukai} that any finite group $G$ with a symplectic action on a K3 surface is a subgroup of a member of a list of eleven groups, and gave an example of a symplectic action of each of these maximal groups.  Xiao gave an alternate proof of the classification in \cite{Xiao} by listing the possible types of singularities, and Kond{\=o} showed in \cite{Kondo} that the action of $G$ on the K3 lattice extends to an action on a Niemeier lattice.  

The Picard group of $X$ has a primitive sublattice $S_G$ determined by the action of $G$.  The rank of $S_G$ varies from $8$ to $19$, depending on $G$.  Thus, K3 surfaces which admit symplectic group actions provide a rich source of examples of families of K3 surfaces with high-rank Picard groups.  The monodromy and mirror symmetry properties of algebraic K3 surfaces which admit a sublattice $S_G$ of rank $18$, and therefore have a Picard group of rank $19$,  have been extensively studied.  (cf. \cite{NarShiga, Smith, DorKerr})  Conversely, if the structure of $\mathrm{Pic}(X)$ is known, one may examine its sublattices to detect symplectic group actions on $X$.  Morrison used the structure of $S_G$ for $G = \mathbf{Z}/2\mathbf{Z}$ to study K3 surfaces which admit Shioda-Inose structures in \cite{Morrison}.  Recently, Garbagnati and Sarti have computed $S_G$ for all possible abelian groups with symplectic action, correcting an earlier computation of Nikulin's; Garbagnati has also studied $S_G$ for dihedral groups, and Hashimoto calculated the invariants of $S_G$ for the permutation group $G=\mathcal{S}_5$ (see \cite{GarSarti, Garbagnati, Gar08b, Gar09, Hashimoto}).

In Section~\ref{S:sublattice}, we discuss the relationship between the lattice $S_G$ and the singularities of $X/G$ for any symplectic $G$-action, and show how to compute the rank and discriminant of $S_G$.  In Section~\ref{S:moduli}, we show that the maps between $X$, $Y$, and $X/G$ can be generalized to the realm of moduli spaces, and describe moduli spaces of K3 surfaces with symplectic $G$-action.  The key observation is that we may work backwards from a K3 surface $Y$ endowed with a set of exceptional curves to the K3 surface $X$.  We use our moduli spaces to develop techniques for classifying all possible symplectic actions of a group $G$.

\section{A Sublattice of the Picard Group}\label{S:sublattice}

The cup product induces a bilinear form $\langle \, , \rangle$ on $H^2(X,\mathbf{Z}) \cong H \oplus H \oplus H \oplus E_8 \oplus E_8$.  (We take $E_8$ to be negative definite.)  Using this form, we define $S_G = (H^2(X,\mathbf{Z})^G)^\perp$.  The Picard group of $X$, $\mathrm{Pic}(X)$,  consists of $H^{1,1}(X) \cap H^2(X,\mathbf{Z})$; the group $\mathcal{T}(X) \subseteq H^2(X,\mathbf{Z})$ of transcendental cycles is defined as $(\mathrm{Pic}(X))^\perp$.  Nikulin showed that the groups $\mathrm{Pic}(X)$ and $S_G$ are related:

\begin{proposition}\label{P:Nik} \cite[Lemma 4.2]{Nikulin}
$S_G \subseteq \mathrm{Pic}(X)$ and $\mathcal{T}(X) \subseteq H^2(X,\mathbf{Z})^G$.  The lattice $S_G$ is nondegenerate and negative definite.
\end{proposition}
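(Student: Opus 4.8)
The plan is to locate $S_G$ inside the Hodge decomposition of $H^2(X,\mathbf{C})$. Since $G$ acts symplectically it fixes the class $\omega_X$ spanning $H^{2,0}(X)$, and because $G$ is finite the projector $\frac{1}{|G|}\sum_{g\in G} g^*$ shows $(H^2(X,\mathbf{C}))^G = H^2(X,\mathbf{Z})^G\otimes\mathbf{C}$; hence $\omega_X$ and $\bar{\omega}_X$ both lie in $H^2(X,\mathbf{Z})^G\otimes\mathbf{C}$. Extending $\langle\,,\rangle$ $\mathbf{C}$-bilinearly, every $x\in S_G = (H^2(X,\mathbf{Z})^G)^\perp$ is then orthogonal to $\omega_X$ and $\bar{\omega}_X$. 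Since the cup product pairs $H^{p,q}$ nontrivially only with $H^{2-p,2-q}$, the orthogonal complement of $H^{2,0}(X)\oplus H^{0,2}(X)$ in $H^2(X,\mathbf{C})$ is exactly $H^{1,1}(X)$, so $S_G\subseteq H^{1,1}(X)\cap H^2(X,\mathbf{Z})$. The Lefschetz $(1,1)$ theorem identifies this intersection with $\mathrm{Pic}(X)$, giving the first inclusion.

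For the second inclusion I would take orthogonal complements. The invariant lattice $H^2(X,\mathbf{Z})^G$ is primitive in $H^2(X,\mathbf{Z})$: if $nv$ is $G$-invariant for a nonzero integer $n$ then $n(g^*v-v)=0$ for all $g$, whence $g^*v=v$ since $H^2(X,\mathbf{Z})$ is torsion-free. As $H^2(X,\mathbf{Z})$ is unimodular, taking double orthogonal complements of a primitive sublattice recovers it, so $(S_G)^\perp = ((H^2(X,\mathbf{Z})^G)^\perp)^\perp = H^2(X,\mathbf{Z})^G$. Applying $(\cdot)^\perp$ to $S_G\subseteq\mathrm{Pic}(X)$ then yields $\mathcal{T}(X)=\mathrm{Pic}(X)^\perp\subseteq (S_G)^\perp = H^2(X,\mathbf{Z})^G$.

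It remains to show $\langle\,,\rangle|_{S_G}$ is nondegenerate and negative definite, and the key step here is to produce a $G$-invariant positive class. Because $G$ acts by automorphisms it preserves the Kähler cone of $X$, an open convex cone in $H^{1,1}(X,\mathbf{R})$, so averaging any Kähler class over $G$ yields a $G$-invariant Kähler class $\kappa$ with $\langle\kappa,\kappa\rangle>0$ (in the projective case one averages an ample class instead). Then $\kappa\in(H^2(X,\mathbf{R}))^G$, so $S_G$ is orthogonal to $\kappa$ as well as to $\mathrm{Re}\,\omega_X$ and $\mathrm{Im}\,\omega_X$. Using $\langle\omega_X,\omega_X\rangle=0$, $\langle\omega_X,\bar{\omega}_X\rangle>0$, and $\kappa\perp\omega_X,\bar{\omega}_X$ (as $\kappa$ is of type $(1,1)$), these three classes span a positive-definite $3$-plane in $H^2(X,\mathbf{R})$, which has signature $(3,19)$ for a K3 surface. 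Therefore $S_G\otimes\mathbf{R}$ is contained in the orthogonal complement of this $3$-plane, a negative-definite subspace, so $S_G$ is negative definite and in particular nondegenerate.

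The only genuinely nontrivial ingredient is the existence of the $G$-invariant Kähler (or ample) class: the inclusion $S_G\subseteq H^{1,1}(X)$ by itself only forces the form on $S_G$ to have at most one positive eigenvalue, and the averaging argument is exactly what rules that eigenvalue out. Everything else is formal manipulation of orthogonal complements in the unimodular lattice $H^2(X,\mathbf{Z})$ together with the Hodge index theorem.
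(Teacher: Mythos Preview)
Your proof is correct. The paper does not supply its own proof of this proposition but simply cites \cite[Lemma~4.2]{Nikulin}; your argument---orthogonality of $S_G$ to $\omega_X$ and $\bar\omega_X$ from the symplectic hypothesis, together with orthogonality to an averaged K\"ahler class so as to exhaust the positive $3$-plane in the signature-$(3,19)$ space $H^2(X,\mathbf{R})$---is exactly the standard proof one finds there.
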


\noindent In this section, we show how to compute the rank and discriminant of $S_G$, and relate $S_G$ to the singularity structure of $X/G$.

The number of fixed points of an element $g$ of a group $G$ acting symplectically on a K3 surface $X$ depends only on the order of $g$, by the results of \cite[\S 5]{Nikulin}.

\begin{proposition}\label{P:MuOg} \cite[\S 3]{Mukai} \cite[Proposition 4.5]{Oguiso} Let $m(n)$ be the number of elements in $G$ of order $n$, and let $f(n)$ be the number of fixed points of an element of order $n$.  Then,
\[\mathrm{rank} \; H^*(X,\mathbf{Z})^G = \frac{1}{|G|} (24 + \sum_{n=2}^8 m(n)f(n) ).\]
\end{proposition}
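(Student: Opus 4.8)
The plan is to realize $\mathrm{rank}\,H^*(X,\mathbf{Z})^G$ as an average of traces over $G$ and then evaluate each trace by means of the topological Lefschetz fixed-point theorem. First I would observe that $H^*(X,\mathbf{Z})$ is torsion-free, so its $G$-invariant part has rank equal to $\dim_{\mathbf{Q}} H^*(X,\mathbf{Q})^G$, and for a finite group the usual averaging projector $\frac{1}{|G|}\sum_{g\in G} g$ onto the invariants yields
\[ \mathrm{rank}\,H^*(X,\mathbf{Z})^G \;=\; \dim_{\mathbf{Q}} H^*(X,\mathbf{Q})^G \;=\; \frac{1}{|G|}\sum_{g\in G}\mathrm{tr}\bigl(g \mid H^*(X,\mathbf{Q})\bigr). \]
It then remains to compute the sum of traces. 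The identity element contributes $\mathrm{tr}(\mathrm{id}) = b_0 + b_2 + b_4 = 1 + 22 + 1 = 24$.

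For $g \neq e$, I would use that the odd Betti numbers of a K3 surface vanish, so that the full trace on $H^*$ coincides with the alternating sum: $\mathrm{tr}\bigl(g \mid H^*(X,\mathbf{Q})\bigr) = \sum_i (-1)^i \mathrm{tr}\bigl(g \mid H^i(X,\mathbf{Q})\bigr) = L(g)$, the Lefschetz number of $g$. By the topological Lefschetz fixed-point theorem, $L(g)$ is the sum over the fixed points of $g$ of the local indices. Here I would invoke the analysis of \cite[\S 5]{Nikulin} cited just above: a nontrivial symplectic automorphism of finite order on a K3 surface has only isolated fixed points, and near such a point $p$ it acts (after linearizing the finite local action) by an element of $\mathrm{SL}_2(\mathbf{C})$ of finite order, necessarily with eigenvalues $\zeta,\zeta^{-1}$ for some root of unity $\zeta \neq 1$. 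Consequently $\mathrm{id} - dg_p$ is invertible and its real determinant is $|(1-\zeta)(1-\zeta^{-1})|^2 > 0$, so the local index at every fixed point is $+1$ and $L(g)$ is exactly the number of fixed points of $g$. Since \cite[\S 5]{Nikulin} shows this number depends only on $\mathrm{ord}(g)$, it equals $f(\mathrm{ord}(g))$.

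Finally I would collect terms. Grouping the nonidentity elements of $G$ by their order, and recalling that a symplectic automorphism of a K3 surface has order at most $8$ (again by \cite{Nikulin}), we get
\[ \sum_{g\in G}\mathrm{tr}\bigl(g \mid H^*(X,\mathbf{Q})\bigr) \;=\; 24 + \sum_{g \neq e} f(\mathrm{ord}(g)) \;=\; 24 + \sum_{n=2}^{8} m(n)\,f(n), \]
and dividing by $|G|$ gives the stated formula.

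The step I expect to be the only real content is the identification of the local Lefschetz index as $+1$ and, more fundamentally, the finiteness of the fixed locus together with the fact that $f(n)$ is well defined; but both are exactly the geometric input supplied by the previously cited results of Nikulin, so I would simply quote them. Granting that, the remainder is a routine combination of elementary representation theory with the Lefschetz fixed-point formula, and I do not anticipate a genuine obstacle.
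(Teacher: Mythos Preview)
The paper does not give its own proof of this proposition: it is quoted directly from \cite[\S 3]{Mukai} and \cite[Proposition 4.5]{Oguiso} and used as a black box in the proof of Proposition~\ref{P:NikWhit}. Your argument is correct and is in fact the standard one found in those references: the averaging projector from elementary representation theory, followed by the topological Lefschetz fixed-point formula, together with Nikulin's results that nontrivial symplectic automorphisms have only isolated fixed points, have order at most $8$, and have fixed-point count depending only on the order. Your computation of the local index as $+1$ is also fine; one can even shorten it slightly by noting that for any holomorphic automorphism with an isolated nondegenerate fixed point the real determinant $\det_{\mathbf{R}}(\mathrm{id}-dg_p)=\lvert\det_{\mathbf{C}}(\mathrm{id}-dg_p)\rvert^{2}>0$, without needing the specific $\mathrm{SL}_2$ form. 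There is no gap.
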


\noindent Since $G$ acts as the identity on $H^0(X, \mathbf{Z})$ and $H^4(X,\mathbf{Z})$ as well as $H^{2,0}(X)$ and $H^{0,2}(X)$, we also know that $\mathrm{rank} \; H^*(X,\mathbf{Z})^G \geq 4$.

Because $G$ acts symplectically on $X$, $X/G$ has a minimal resolution $Y$ which is also a K3 surface.  Let $\{p_i\}$ be the singular points of $X/G$.  The inverse image in $Y$ of $p_i$ is a configuration $\Psi_i$ of $(-2)$-curves of type $A_l$, $D_m$ or $E_n$; let $c_i$ be the number of curves in this configuration.  The configurations $\Psi_i$ generate a lattice $K$ in $\mathrm{Pic}(Y) \subset H^2(Y,\mathbf{Z})$ of rank $\sum_i c_i$.  Let $M$ be the minimal primitive sublattice of $H^2(Y,\mathbf{Z})$ containing $K$.  Then $M$ also has rank $\sum_i c_i$, and $H^2(Y,\mathbf{Z})/M$ is a free abelian group.  Xiao showed in \cite[Lemma 6]{Xiao} that $M$ is uniquely determined by the $\Psi_i$.   

\begin{remark} \cite[Theorem 3]{Xiao}
If $G$ is isomorphic to $Q_8$, the group of unit quaternions, or $T_{24}$, the binary tetrahedral group of order $24$, then $K$ may be one of two different lattices, depending on the action of $G$.  In all other cases, $K$ (and thus $M$) is uniquely determined by $G$. 
\end{remark}

Let $\{q_{ij}\}$ be the inverse images in $X$ of $p_i$, and let $G_i$ be the stabilizer group of any $q_{ij}$; set $N_i = |G_i|$.

\begin{proposition}\label{P:Xiao} \cite[Lemma 1]{Xiao}
\[\sum_i c_i = \frac{24(|G|-1)}{|G|} - \sum_{i=1}^{k} \frac{N_i-1}{N_i}.\]
\end{proposition}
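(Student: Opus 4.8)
The plan is to prove the identity by computing the topological Euler characteristic $e$ of the resolution $Y$ in two different ways and comparing. Recall that both $X$ and $Y$ are K3 surfaces, so $e(X) = e(Y) = 24$.

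First I would relate $e(Y)$ to $e(X/G)$ via the resolution map $Y \to X/G$. This map is an isomorphism away from the singular points $p_i$, and over each $p_i$ its fiber is the configuration $\Psi_i$. Since an $A_l$, $D_m$, or $E_n$ configuration of $(-2)$-curves is a tree of copies of $\mathbf{P}^1$ — $c_i$ vertices joined by $c_i - 1$ edges — its Euler characteristic is $2c_i - (c_i - 1) = c_i + 1$. Stratifying $Y$ as $(Y \setminus \bigsqcup_i \Psi_i) \sqcup \bigsqcup_i \Psi_i$ and $X/G$ as $\bigl((X/G)\setminus\{p_i\}\bigr) \sqcup \{p_i\}$, using additivity of $e$ and the fact that the open parts are homeomorphic, one gets
\[ e(Y) = e(X/G) - k + \sum_{i=1}^{k} (c_i + 1) = e(X/G) + \sum_{i=1}^{k} c_i, \]
so $\sum_i c_i = 24 - e(X/G)$.

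Next I would compute $e(X/G)$ using the quotient map $\pi \colon X \to X/G$. The set $F = \{q_{ij}\}$ of points of $X$ with nontrivial stabilizer is finite, and $G$ acts freely on $X \setminus F$, so $e(X \setminus F) = |G| \cdot e\bigl((X/G) \setminus \{p_i\}\bigr)$. The preimage $\pi^{-1}(p_i)$ is a single $G$-orbit, namely that of any $q_{ij}$, and hence has $[G:G_i] = |G|/N_i$ points; thus $|F| = \sum_i |G|/N_i$. Combining these facts gives $24 - \sum_i |G|/N_i = |G|\bigl(e(X/G) - k\bigr)$, i.e. $e(X/G) = k + 24/|G| - \sum_i 1/N_i$.

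Substituting this into $\sum_i c_i = 24 - e(X/G)$ and rewriting $24 - 24/|G| = 24(|G|-1)/|G|$ and $-k + \sum_i 1/N_i = -\sum_i (N_i - 1)/N_i$ yields the stated formula. I expect the only genuinely delicate points to be the justification that $e$ is additive over such stratifications and multiplicative for free finite-group quotients (which holds for the compactly supported Euler characteristic, and this agrees with the ordinary Euler characteristic on the spaces at hand), together with the orbit-counting bookkeeping — in particular the observation that all preimages of a fixed $p_i$ share a common stabilizer order $N_i$ because they form one $G$-orbit and hence have conjugate stabilizers.
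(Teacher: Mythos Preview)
Your argument is correct. Note, however, that the paper does not actually supply a proof of this proposition: it simply quotes it as \cite[Lemma 1]{Xiao} and then uses it as input to the proof of Proposition~\ref{P:NikWhit}. So there is no ``paper's own proof'' to compare against here.

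That said, your Euler-characteristic computation is exactly the standard argument and is essentially how Xiao proves the lemma in the cited reference: compute $e(Y)$ in two ways, once via the resolution $Y \to X/G$ (gaining $c_i$ for each singular point) and once via the Galois cover $X \to X/G$ (Riemann--Hurwitz-type bookkeeping for the ramification orbits). Your treatment of the delicate points --- additivity of $e$ on the strata, multiplicativity for the free part of the action, the tree structure of an ADE configuration giving $e(\Psi_i)=c_i+1$, and the fact that the $q_{ij}$ over a fixed $p_i$ form a single orbit with conjugate stabilizers --- is accurate. The only background fact you invoke implicitly is that a nontrivial symplectic automorphism of a K3 surface has finitely many fixed points, which is standard (and is what makes the singular locus of $X/G$ finite in the first place).
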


\begin{proposition}\label{P:NikWhit}
$\mathrm{rank} \; S_{G} = \sum_i c_i.$
\end{proposition}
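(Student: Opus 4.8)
The plan is to compute $\mathrm{rank}\,S_G$ from the trace formula of Proposition~\ref{P:MuOg} and match the answer against the curve count of Proposition~\ref{P:Xiao}. First I would note that $H^2(X,\mathbf{Z})$ is unimodular, hence nondegenerate, so for any sublattice the rank of the orthogonal complement is the corank: $\mathrm{rank}\,S_G = 22 - \mathrm{rank}\,H^2(X,\mathbf{Z})^G$. Since $X$ is a K3 surface its odd Betti numbers vanish, and $G$ acts trivially on the rank-one lattices $H^0(X,\mathbf{Z})$ and $H^4(X,\mathbf{Z})$, so $\mathrm{rank}\,H^*(X,\mathbf{Z})^G = \mathrm{rank}\,H^2(X,\mathbf{Z})^G + 2$. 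Combining these with Proposition~\ref{P:MuOg} gives
\[
\mathrm{rank}\,S_G = 24 - \mathrm{rank}\,H^*(X,\mathbf{Z})^G = \frac{24(|G|-1)}{|G|} - \frac{1}{|G|}\sum_{n=2}^{8} m(n)f(n).
\]

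Comparing this with the formula of Proposition~\ref{P:Xiao}, it then suffices to prove the combinatorial identity
\[
\frac{1}{|G|}\sum_{n=2}^{8} m(n)f(n) = \sum_{i=1}^{k} \frac{N_i-1}{N_i}.
\]
I would establish this by double counting the set of pairs $(g,x)$ with $g \in G \setminus \{1\}$ and $g\cdot x = x$. Summing over $g$ first, and using that the fixed-point number $f(n)$ depends only on the order of $g$ by \cite[\S 5]{Nikulin}, produces $\sum_{n=2}^{8} m(n)f(n)$. Summing over $x$ first, the only contributions come from points with nontrivial stabilizer $G_x$, each contributing $|G_x| - 1$; such points are exactly the preimages $q_{ij}$ of the singular points $p_i$, because a symplectic automorphism acts on $T_xX$ through $SL(T_xX)$, hence freely in codimension one, so the local quotient is a rational double point precisely when $G_x \neq \{1\}$. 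For fixed $i$ the points $q_{ij}$ form one $G$-orbit of size $|G|/N_i$, each with stabilizer of order $N_i$, so the second count equals $\sum_i (|G|/N_i)(N_i-1) = |G|\sum_i \frac{N_i-1}{N_i}$. Dividing by $|G|$ gives the identity and hence the proposition.

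I expect the point needing the most care to be the geometric input above: that every point of $X$ with nontrivial stabilizer lies over some $p_i$, that such points are isolated (so the sums are finite), and that each orbit over $p_i$ has the stated size and constant stabilizer order $N_i$. These all follow from the local normal form of a symplectic automorphism near a fixed point together with Nikulin's analysis in \cite[\S 5]{Nikulin}, after which only the elementary double count remains. As a remark, one could alternatively argue geometrically: $H^2(X/G,\mathbf{Q}) \cong H^2(X,\mathbf{Q})^G$ and the resolution $Y \to X/G$ contributes an extra $\sum_i c_i$ to $\mathrm{rank}\,H^2$ coming from the exceptional configurations $\Psi_i$, so $22 = \mathrm{rank}\,H^2(X,\mathbf{Z})^G + \sum_i c_i$, which is the same conclusion; I would, however, present the arithmetic comparison since it uses exactly Propositions~\ref{P:MuOg} and \ref{P:Xiao}.
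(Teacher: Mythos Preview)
Your proof is correct and follows essentially the same route as the paper: both reduce the statement, via $\mathrm{rank}\,S_G = 24 - \mathrm{rank}\,H^*(X,\mathbf{Z})^G$ together with Propositions~\ref{P:MuOg} and \ref{P:Xiao}, to the identity $\sum_{n=2}^{8} m(n)f(n) = \sum_i \frac{|G|}{N_i}(N_i-1)$, and then prove that identity by the same double count of pairs $(g,x)$ with $g\neq 1$ and $g\cdot x = x$. The alternative geometric argument you sketch at the end (comparing $b_2(Y)$ with $\mathrm{rank}\,H^2(X,\mathbf{Q})^G$ plus the exceptional classes) is a nice aside not present in the paper.
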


Nikulin discusses this proposition for the case that $G$ is abelian in \cite[\S 10]{Nikulin}.  We use Propositions \ref{P:MuOg} and \ref{P:Xiao} to give a brief proof for any $G$.

\begin{proof}
We calculate:
\begin{align}
\mathrm{rank} \; H^*(X,\mathbf{Z})^G + \sum_i c_i & = 24 - \mathrm{rank} \; S_{G} + \sum_i c_i \notag \\
& = \frac{1}{|G|} (24 + \sum_{n=2}^8 m(n)f(n) ) + \frac{24(|G|-1)}{|G|} - \sum_{i=1}^{k} \frac{N_i-1}{N_i} \notag \\
& = 24 + \frac{1}{|G|} \sum_{n=2}^8 m(n)f(n) - \sum_{i=1}^{k} \frac{N_i-1}{N_i}. \notag
\end{align}

Thus, it suffices to show that 
\[\sum_{n=2}^8 m(n)f(n) = \sum_{i=1}^{k} \frac{|G|}{N_i}(N_i-1).\]

$\sum_{n=2}^8 m(n)f(n)$ counts each non-identity element $g$ of $G$ once for each point of $X$ which $g$ fixes.  $N_i - 1$ counts the non-identity elements of the stabilizer group $G_i$.  The point $p_i$ has precisely $\frac{|G|}{N_i}$ preimages $q_{ij}$ in $X$; by definition, the elements of $G_i$ fix the $q_{ij}$.  Summing over all singular points $p_i$, we see that $\sum_{i=1}^{k} \frac{|G|}{N_i}(N_i-1)$ also counts every element of $G$ other than the identity once for each point of $X$ which that element fixes.
\end{proof}

Though the lattices $S_G$ and $M$ are primitive sublattices of the K3 lattice $H \oplus H \oplus H \oplus E_8 \oplus E_8$ and have the same rank, they are not isomorphic: by \cite[Lemma 4.2]{Nikulin}, $S_G$ contains no elements with square $-2$.  Instead, the relationship between $S_G$ and $M$ is given by the fact that $S_G = (H^2(X,\mathbf{Z})^G)^\perp$ and the following exact sequence.

\begin{theorem}\label{T:exactSequence}
There exists an exact sequence 
\[\begin{CD}
0 @>>>  M/K @>>> H^2(Y,\mathbf{Z})/K @> \theta >> (H^2(X,\mathbf{Z}))^G @>>> H^3(G,\mathbf{Z}) @>>> 0 \end{CD}\]
where $\langle \theta (m),\theta (n) \rangle = |G| \, \langle m,n \rangle $.
\end{theorem}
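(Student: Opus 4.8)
The plan is to build the exact sequence geometrically using the rational map $\pi\colon X \dashrightarrow Y$ induced by the quotient and resolution, together with the transfer/pushforward maps on cohomology. First I would note that away from the singular points, the quotient map $X \to X/G$ is an honest $|G|$-to-$1$ cover, and the resolution $Y \to X/G$ is an isomorphism; so there is an open set $U \subseteq Y$ whose complement is supported on the exceptional configurations $\Psi_i$, and over which we have a genuine $G$-cover $\tilde U \to U$ with $\tilde U \subseteq X$ open with complement the finite point set $\{q_{ij}\}$. On these open pieces I would use the standard fact from equivariant cohomology / the theory of quotients: for a free action the transfer gives $H^2(U,\mathbf{Z}) \cong H^2(\tilde U, \mathbf{Z})^G$ (more precisely, pullback $\pi^*$ is injective with image the invariants, up to controlled torsion coming from group cohomology $H^*(G,\mathbf{Z})$), and the multiplicativity $\langle \pi^* a, \pi^* b\rangle = |G|\langle a,b\rangle$ of the intersection form under a degree-$|G|$ map is exactly the asserted formula $\langle \theta(m),\theta(n)\rangle = |G|\langle m,n\rangle$.

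**Assembling the map $\theta$ and identifying kernel and cokernel.**

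Next I would define $\theta$ on $H^2(Y,\mathbf{Z})/K$. A class in $H^2(Y,\mathbf{Z})$ restricts to $U$, pulls back to $\tilde U \subseteq X$, and I want to extend it over the finitely many points $\{q_{ij}\}$ to a class in $H^2(X,\mathbf{Z})$; since removing finitely many points from a smooth surface does not change $H^2$ (the local cohomology at a point vanishes in the relevant degree), this extension exists and is unique, landing in $H^2(X,\mathbf{Z})^G$. The subtlety is that this is only well-defined modulo the classes supported on the $\Psi_i$ that become trivial on $X$ — and the exceptional curves contract to points in $X/G$, hence their pullbacks to $\tilde U$ and thence to $X$ vanish. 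This forces $K$ (generated by the $\Psi_i$) into the kernel, and in fact the whole primitive closure $M$ maps to zero, since $\theta$ followed by the inclusion into a torsion-free lattice kills torsion, and $M/K$ is exactly the torsion of $H^2(Y,\mathbf{Z})/K$ that $\theta$ must annihilate. So I would prove exactness at $H^2(Y,\mathbf{Z})/K$ by showing $\ker\theta = M/K$: one inclusion is the contraction argument above; the reverse uses that a class mapping to $0$ in the torsion-free lattice $H^2(X,\mathbf{Z})^G$ must already be torsion in $H^2(Y,\mathbf{Z})/K$, and the torsion subgroup of $H^2(Y,\mathbf{Z})/K$ is precisely $M/K$ by the definition of $M$ as the primitive closure.

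**The cokernel and the main obstacle.**

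For exactness at $(H^2(X,\mathbf{Z}))^G$, I would identify the cokernel of $\theta$ with $H^3(G,\mathbf{Z})$. This is the heart of the matter and the step I expect to be the main obstacle. The idea is to run the Cartan–Leray (or Hochschild–Serre) spectral sequence for the free quotient $\tilde U \to U$, or equivalently the spectral sequence $H^p(G, H^q(\tilde U,\mathbf{Z})) \Rightarrow H^{p+q}(U,\mathbf{Z})$ when $G$ acts freely. Because $\tilde U$ is obtained from a K3 surface by deleting points, $H^1(\tilde U,\mathbf{Z}) = 0$ and $H^0$ is trivial as a $G$-module, so the relevant differentials and edge maps produce an exact sequence relating $H^2(U,\mathbf{Z})$, $H^2(\tilde U,\mathbf{Z})^G = H^0(G,H^2)$, and the group cohomology term $H^3(G,\mathbf{Z}) = H^3(G,H^0)$; the obstruction to a $G$-invariant class on $\tilde U$ descending to $U$ lives exactly in $H^3(G,\mathbf{Z})$ (here using $H^2(G,\mathbf{Z}) \cong H_1(G,\mathbf{Z})$-type vanishing or cancellation because $H^1(\tilde U)=0$, and $H^3(G,\mathbf{Z})\cong H^4(G,\mathbf{Z})$ considerations from the $H^0$ row interacting with Poincaré-type duality on the compactly-supported side). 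The technical care required is (i) passing correctly between the open surfaces and the closed ones $X$, $Y$ so that the point-deletions and curve-deletions do not introduce spurious cohomology — this is where one invokes that $H^2$ is insensitive to removing points and that removing the $(-2)$-curves is accounted for precisely by quotienting by $K$; and (ii) checking the spectral sequence degenerates enough that the only surviving contribution to the cokernel is $H^3(G,\mathbf{Z})$, with no leftover $H^2(G,-)$ or higher terms. Once the cokernel is pinned down, surjectivity onto $H^3(G,\mathbf{Z})$ and exactness on the right follow formally from the spectral sequence edge maps, and the bilinear-form identity $\langle\theta(m),\theta(n)\rangle = |G|\langle m,n\rangle$ is the elementary degree computation noted above.
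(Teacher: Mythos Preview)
Your approach is essentially the same as the paper's: work on the open complements $\tilde U = X \setminus \{q_{ij}\}$ and $U = Y \setminus \bigcup_i \Psi_i$ where the $G$-action is free, and extract the exact sequence from Cartan--Leray / Hochschild--Serre. The paper streamlines this by quoting the Cartan--Eilenberg five-term sequence
\[
0 \to H^2(G,\mathbf{Z}) \to H^2(U,\mathbf{Z}) \xrightarrow{\theta} H^2(\tilde U,\mathbf{Z})^G \to H^3(G,\mathbf{Z}) \to H^3(U,\mathbf{Z})
\]
directly, then invoking Xiao's lemma for the identifications $H^2(G,\mathbf{Z}) \cong M/K$ and $H^2(U,\mathbf{Z}) \cong H^2(Y,\mathbf{Z})/K$. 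Your torsion argument for $\ker\theta = M/K$ is fine and amounts to the same thing.

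There is, however, a genuine gap at the right-hand end. You assert that ``surjectivity onto $H^3(G,\mathbf{Z})$ and exactness on the right follow formally from the spectral sequence edge maps,'' but this is exactly the step that does \emph{not} follow formally: the spectral sequence (equivalently, the Cartan--Eilenberg sequence above) only gives you a map $H^3(G,\mathbf{Z}) \to H^3(U,\mathbf{Z})$, and you must show this map is zero. Your parenthetical remarks about ``$H^3(G,\mathbf{Z}) \cong H^4(G,\mathbf{Z})$ considerations'' and ``Poincar\'e-type duality on the compactly-supported side'' do not lead anywhere here. The paper's argument is concrete: compute $H^3(U,\mathbf{Z})$ by Mayer--Vietoris, using that each tubular neighbourhood $N_i$ of $\Psi_i$ is homotopy equivalent to a wedge of $2$-spheres and each boundary $L_i$ is a closed orientable $3$-manifold, to conclude that $H^3(U,\mathbf{Z})$ is free abelian of rank $k-1$. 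Since $H^3(G,\mathbf{Z})$ is finite, the connecting map must vanish, and the sequence terminates in $0$ as claimed. Without this computation (or an equivalent one), the cokernel could a priori be a proper quotient of $H^3(G,\mathbf{Z})$.
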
  

\begin{proof}
Let $X' = X - (\cup_{i,j} q_{ij})$ and let $Y' = Y - (\cup_i \Psi_i)$.  Since $X$ is a simply connected complex surface, $X'$ is also simply connected; since $Y' = X'/G$, $X'$ is the universal covering space of $Y'$.  By \cite[Application XVI.1]{EilCart}, there exists an exact sequence

\[\begin{CD}
0 @>>>  H^2(G,\mathbf{Z})  @>>> H^2(Y',\mathbf{Z}) @> \theta >> (H^2(X',\mathbf{Z}))^G @>>> H^3(G,\mathbf{Z}) @> \zeta >> H^3(Y',\mathbf{Z}) \end{CD}.\]

Since $\theta$ is induced by the quotient map $X' \rightarrow Y'$, $\langle \theta (m),\theta (n) \rangle = |G| \, \langle m,n \rangle $.  Xiao showed in\cite[Lemma 2]{Xiao} that $H^2(G,\mathbf{Z}) = M/K$ and $H^2(Y',\mathbf{Z}) = H^2(Y,\mathbf{Z})/K$; because $X$ is a complex surface (and therefore has four real dimensions), $H^2(X,\mathbf{Z}) = H^2(X',\mathbf{Z})$.  Since $G$ is a finite group, $H^3(G,\mathbf{Z})$ is a finite abelian group.  We shall show that $H^3(Y',\mathbf{Z})$ is a free abelian group, so $\zeta$ must be the zero map.

Let $N_i$ be a tubular neighborhood of the configuration of exceptional curves $\Psi_i$ in $Y$, and let $L_i$ be the boundary of $N_i$.  Consider the Mayer-Vietoris sequence

\[\begin{CD}
\dots  @>>> H^3(Y, \mathbf{Z})  @>>> H^3(Y', \mathbf{Z}) \oplus \bigoplus_i H^3(N_i, \mathbf{Z})  @>>> \bigoplus_i H^3(L_i, \mathbf{Z}) @>>>  H^4(Y, \mathbf{Z})  @>>>  \dots \end{CD}\]
 
Since $Y$ is a K3 surface, $H^3(Y)=0$ and $H^4(Y)=\mathbf{Z}$.  Because $N_i$ is a tubular neighborhood of an ADE configuration of curves, $N_i$ is homotopy equivalent to a bouquet of $c_i$ $2$-spheres, so $H^3(N_i)=0$.  Since $L_i$ is a smooth real $3$-manifold, $H^3(L_i)=\mathbf{Z}$.  Furthermore, the map $\bigoplus_i H^3(L_i) \rightarrow H^4(Y)$ is given by $f: \mathbf{Z}^k \rightarrow \mathbf{Z}$, where $f((x_1, \dots, x_{c_i})) = x_1 +..+x_{k}$.  Thus, $H^3(Y')$ is isomorphic to the kernel of $f$, a free abelian group of rank $k - 1$.

\end{proof}

\begin{remark}
In \cite[Proposition 2.4]{Garbagnati}, Garbagnati proved a variant of Theorem~\ref{T:exactSequence} in the case that $G$ is an abelian group, correcting Nikulin's claim that $\theta$ is surjective. 
\end{remark}


\begin{lemma}\label{L:imageTheta}\cite[Lemma 10.2]{Nikulin}
Let $J = \rm{Im}(\theta)$.  Then the lattice discriminants $d(J)$ and $d(M)$ are related by 
\[d(J) = -\frac{|G|^{22 - \mathrm{rank}(M)}}{d(M)}.\]
\end{lemma}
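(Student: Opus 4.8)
\emph{The plan} is to exhibit $J$ as a finite–index overlattice of a $|G|$–rescaled copy of $M^{\perp}$, and then to extract $d(J)$ by tracking Gram determinants. From the exact sequence of Theorem~\ref{T:exactSequence}, $\ker\theta = M/K$, so as an abelian group $J = \mathrm{Im}(\theta) \cong (H^2(Y,\mathbf{Z})/K)/(M/K) = H^2(Y,\mathbf{Z})/M$, which is free of rank $22 - \mathrm{rank}(M)$. Set $N = M^{\perp} \subseteq H^2(Y,\mathbf{Z})$. Since $K \subseteq M$ and $M \cap N = 0$, the image of $N$ in $H^2(Y,\mathbf{Z})/K$ is isomorphic to $N$ and meets $\ker\theta$ only in $0$; hence $\theta$ restricts to an injection on it, with image a sublattice $J_0 \subseteq J$ abstractly isomorphic to $N$. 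For $m,n \in N$ the pairing $\langle m,n\rangle$ in Theorem~\ref{T:exactSequence} is the honest cup product on $Y$, so the relation $\langle \theta(m),\theta(n)\rangle = |G|\,\langle m,n\rangle$ says that $J_0$ is $N$ with its form multiplied by $|G|$; therefore
\[
d(J_0) = |G|^{\,22-\mathrm{rank}(M)}\, d(N).
\]

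Next I would compute $[J:J_0]$. Under the identification $J \cong H^2(Y,\mathbf{Z})/M$ the sublattice $J_0$ is the image of $N$, so $[J:J_0] = [H^2(Y,\mathbf{Z}) : M \oplus N]$. Because $H^2(Y,\mathbf{Z})$ is unimodular and $M$ is primitive (hence so is $N = M^{\perp}$), the standard theory of primitive sublattices of unimodular lattices gives $[H^2(Y,\mathbf{Z}) : M \oplus N] = |d(M)| = |d(N)|$. Combining with the previous display and using $d(M)^2 = |d(M)|^2$,
\[
d(J) = \frac{d(J_0)}{[J:J_0]^2} = \frac{|G|^{\,22-\mathrm{rank}(M)}\, d(N)}{d(M)^2}.
\]
(Equivalently: $H^2(Y,\mathbf{Z})/M$ with its induced form is $(M^{\perp})^{*}$, and $J$ is that lattice scaled by $|G|$.)

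It remains to rewrite $d(N)/d(M)^2$, which is purely a sign bookkeeping. A nondegenerate lattice of signature $(s_+,s_-)$ has discriminant of sign $(-1)^{s_-}$. The lattice $M$ inherits the negative–definite signature of the root lattice $K$, so $d(M) = (-1)^{\mathrm{rank}(M)}|d(M)|$; and $N = M^{\perp}$ inside the signature–$(3,19)$ lattice $H^2(Y,\mathbf{Z})$ has signature $(3,\,19-\mathrm{rank}(M))$, so $d(N) = (-1)^{19-\mathrm{rank}(M)}|d(M)|$. Substituting these and using $(-1)^{19} = -1$ gives $d(N)/d(M)^2 = -1/d(M)$, whence
\[
d(J) = -\,\frac{|G|^{\,22-\mathrm{rank}(M)}}{d(M)},
\]
as claimed.

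None of the steps is individually deep, but three points need care: that $J_0$ carries exactly the $|G|$–scaled cup product of $N$, which uses $K \subseteq M$ so that passing to the quotient by $K$ leaves $N$ and its form untouched; the identity $[H^2(Y,\mathbf{Z}) : M \oplus M^{\perp}] = |d(M)|$, which rests on unimodularity of the K3 lattice together with primitivity of $M$; and keeping the signs of the three discriminants $d(J)$, $d(M)$, $d(N)$ mutually consistent. I expect the index computation and the sign bookkeeping to be the only real friction.
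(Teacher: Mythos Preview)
The paper does not supply its own proof of this lemma; it is simply cited from \cite[Lemma 10.2]{Nikulin}. Your argument is correct and is essentially the standard one: you realize $J$, as a lattice, as the $|G|$-rescaling of $(M^{\perp})^{*}\cong H^{2}(Y,\mathbf{Z})/M$, which gives the discriminant formula once signs are sorted. The three points you flag---the scaling of the form on $N=M^{\perp}$, the index identity $[L:M\oplus M^{\perp}]=|d(M)|$ for a primitive $M$ in a unimodular $L$, and the sign bookkeeping via signatures---are all handled correctly.

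One small remark on presentation: the relation $\langle\theta(m),\theta(n)\rangle=|G|\,\langle m,n\rangle$ in Theorem~\ref{T:exactSequence} is stated on the quotient $H^{2}(Y,\mathbf{Z})/K$, where the cup product does not literally descend. You implicitly use it only for representatives $m,n\in N\subseteq K^{\perp}$, where the pairing \emph{is} well defined on cosets, so your application is legitimate; it would be worth making that explicit in a final write-up.
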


\begin{example}
Let $X$ be a K3 surface which admits a symplectic action by the permutation group $G = \mathcal{S}_4$.  Then $\mathrm{Pic}(X)$ admits a primitive sublattice $S_G$ which has rank $17$ and discriminant $d(S_G) = -2^6 \cdot 3^2$.
\end{example}

\begin{proof}
According to \cite[Table 2]{Xiao}, when $G = \mathcal{S}_4$, $K$ is the rank $17$ lattice given by $(A_3)^2 \oplus (A_2)^3 \oplus (A_1)^5$, and $M/K \cong \mathbf{Z}/(2 \mathbf{Z})$.   Next we use the fact that if lattices $L$ and $L'$ have the same rank, and $L \subset L'$, then the discriminants $d(L)$ and $d(L')$ are related by $d(L)/d(L') = [L':L]^2$, where $[L':L]$ is the index of $L$ in $L'$ as an abelian group.  Since $d(K) = -2^9 \cdot 3^3$, we see that $d(M) = -2^7\cdot 3^3$.  By Lemma~\ref{L:imageTheta}, the discriminant $d(J) = 2^8\cdot3^2$.  The cohomology group $H^3(\mathcal{S}_4,\mathbf{Z})$ is isomorphic to $\mathbf{Z}/(2 \mathbf{Z})$, so $[(H^2(X,\mathbf{Z}))^G: J] = 2$ and $d((H^2(X,\mathbf{Z}))^G) = 2^6\cdot 3^2$.  Since $S_G$ is the perpendicular complement of $(H^2(X,\mathbf{Z}))^G$ in the unimodular K3 lattice $H \oplus H \oplus H \oplus E_8 \oplus E_8$, we conclude that $d(S_G) = -d((H^2(X,\mathbf{Z}))^G) = - 2^6\cdot 3^2$.
\end{proof}

\begin{example}\label{E:L2of7}
Let $X$ be a K3 surface which admits a symplectic action by the Chevalley group $G = L_2(7) \cong PSL(2,\mathbf{F}_7)$.  Then $(H^2(X,\mathbf{Z}))^G$ has rank $3$ and discriminant $196$.  
\end{example}

\begin{proof}
Consulting \cite[Table 2]{Xiao}, we find that $K$ is the rank $19$ lattice given by $A_6 \oplus (A_3)^2 \oplus (A_2)^3 \oplus A_1$, and $M \cong K$.  Thus, $d(M) = -7 \cdot 4^2 \cdot 3^3 \cdot 2$.  The order of $L_2(7)$ is $2^3 \cdot 3 \cdot 7$, so by Lemma~\ref{L:imageTheta}, the discriminant $d(J) = 2^4 \cdot 7$.  We may use the computer algebra system \cite{SAGE} to show that $H^3(G,\mathbf{Z}) \cong \mathbf{Z}/2\mathbf{Z}$.  Thus, $[(H^2(X,\mathbf{Z}))^G: J] = 2$, so $d(H^2(X,\mathbf{Z})) = (2^4 \cdot 7^2)/2^2 = 196$.
\end{proof}

\begin{remark}
The result of Example~\ref{E:L2of7} is the ``Key Lemma'' of \cite{OZ}; that paper gives a longer and more involved proof by constructing an embedding of $S_G$ in a Niemeier lattice.
\end{remark}

\section{Classifying Symplectic Group Actions}\label{S:moduli}

In \cite{Nikulin}, Nikulin showed that, when $G$ is abelian, symplectic actions of $G$ are unique up to overall isomorphisms.  In this section, we develop techniques for classifying the symplectic actions of any group, and show that certain non-abelian groups admit multiple distinct symplectic actions.  To do so, we construct moduli spaces of K3 surfaces which can be realized as resolutions of quotients by symplectic group actions.  Our discussion extends and refines the constructions of \cite{Nikulin} in the non-abelian case.

We begin by reviewing the standard constructions of moduli spaces of K3 surfaces.  We follow the exposition and notation of \cite[\S VIII]{BHPV}.

We call a choice of isomorphism $\alpha: H^2(X,\mathbf{Z}) \to L$ a \emph{marking} of $X$, and refer to the pair $(X;\alpha)$ as a \emph{marked K3 surface}.  Let us write $\langle\,,\,\rangle$ for the bilinear form on $L$; we set $L_\mathbf{R} = L \otimes \mathbf{R}$ and $L_\mathbf{C} = L \otimes \mathbf{C}$, and extend the bilinear form appropriately.  

For any nonzero element $\omega$ of $L_\mathbf{C}$, let $[\omega]$ be the corresponding element of the projective space $\mathbf{P}(L_\mathbf{C})$.  Let $\Omega = \{[\omega] \in \mathbf{P}(L_\mathbf{C}) \,|\, \langle \omega,\omega \rangle = 0, \langle \omega,\bar{\omega} \rangle > 0\}$.  Let $(X;\alpha)$ be a marked K3 surface, and let $\omega_X$ be a nowhere-vanishing holomorphic two-form on $X$.  (The form $\omega_X$ is unique up to a scalar multiple.)  The image of $\omega_X$ under $\alpha_\mathbf{C}$ determines a point $[\alpha_\mathbf{C}(\omega_X)]$ in $\mathbf{P}(L_\mathbf{C})$.  Since $\omega_X \wedge \omega_X = 0$ and $\omega_X \wedge \bar{\omega}_X > 0$, $[\alpha_\mathbf{C}(\omega_X)]$ is an element of $\Omega$, which we refer to as the \emph{period point}.  There exists a universal marked family of K3 surfaces.  The base space $\mathcal{N}$ is a non-Hausdorff ``smooth analytic space'' of dimension 20.  The period points of marked K3 surfaces yield a \emph{period map} $\tau_\mathcal{N}: \mathcal{N} \to \Omega$.  

We now consider marked K3 surfaces with specified K\"{a}hler class.  We wish to specify the K\"{a}hler class in a manner consistent with our marking.  For any $[\omega] \in \Omega$, let $E(\omega)$ be the oriented 2-plane in $L_\mathbf{R}$ spanned by $\{\mathrm{Re}\,\omega, \mathrm{Im}\,\omega\}$.  Let $K\Omega$ be the set $\{(\kappa,[\omega]) \in L_\mathbf{R} \times \Omega \,|\, \langle \kappa, \lambda \rangle = 0 \; \forall \; \lambda \in E(\omega)\;\mathrm{and}\;\langle \kappa, \kappa \rangle > 0\}$.  Then $K\Omega$ is a fiber bundle over $\Omega$.  For any $[\omega] \in \Omega$, let $C_\omega$ be the cone $\{x \in L_\mathbf{R}\,|\,\langle x,\omega \rangle = 0, \langle x,x \rangle > 0\}$.  We may choose a connected component $C^+_\omega$ of $C_\omega$ in such a way that $C^+_\omega$ varies continuously with our choice of $[\omega]$.  If $(X;\alpha)$ is a marked K3 surface and $\kappa \in H^{1,1}(X)$ a K\"{a}hler class, we say that $(X,\kappa)$ is a \emph{marked pair} if $\alpha_\mathbf{C}(\kappa) \in C^+_\omega$, where $[\omega]$ is the period point of $X$.  There exists a universal object $\mathcal{M}$ for marked pairs and a natural forgetful map $\mathcal{M} \to \mathcal{N}$.  The space $\mathcal{M}$ is a 60-dimensional real-analytic manifold.

Let $(K\Omega)^0$ be the subset of $K\Omega$ consisting of those points $(\kappa, [\omega])$ such that $\langle \kappa, d \rangle \neq 0$ for every $d \in L$ such that $\langle d,d \rangle = -2$ and $\langle \omega,d\rangle = 0$.  The subset $(K\Omega)^0$ is open in $K\Omega$.  We may define a real-analytic map $\tau_\mathcal{M}: \mathcal{M} \to (K\Omega)^0$ called the \emph{refined period map} as follows: if $m \in \mathcal{M}$ corresponds to the marked pair $(X,\kappa)$, we set $\tau_\mathcal{M}(m) = (\alpha_\mathbf{C}(\kappa),[\omega])$.

The period map $\tau_\mathcal{N}$ and the refined period map $\tau_\mathcal{M}$ together with the forgetful maps fit into a commutative diagram:

\[\begin{CD}
\mathcal{M} @>\tau_\mathcal{M}>> (K\Omega)^0 \\
@VVV              @VVV \\
\mathcal{N} @>\tau_\mathcal{N}>> \Omega
\end{CD}\]

\begin{theorem}\label{T:tauInjective}(See \cite[Theorem VIII.12.3]{BHPV} for a modern proof.)
The refined period map $\tau_\mathcal{M}$ is injective.
\end{theorem}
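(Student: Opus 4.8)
The plan is to derive the injectivity of $\tau_\mathcal{M}$ from the \emph{strong Torelli theorem} for K3 surfaces. Suppose $m,m'\in\mathcal{M}$ satisfy $\tau_\mathcal{M}(m)=\tau_\mathcal{M}(m')$; say $m$ corresponds to the marked pair $(X,\kappa)$ with marking $\alpha\colon H^2(X,\mathbf{Z})\to L$, and $m'$ to $(X',\kappa')$ with marking $\alpha'\colon H^2(X',\mathbf{Z})\to L$. The first step is to assemble the lattice isometry $\phi=\alpha^{-1}\circ\alpha'\colon H^2(X',\mathbf{Z})\to H^2(X,\mathbf{Z})$ and read off what the hypothesis $\tau_\mathcal{M}(m)=\tau_\mathcal{M}(m')$ says about it. Writing $\tau_\mathcal{M}(m)=(\alpha_\mathbf{C}(\kappa),[\omega])$ and $\tau_\mathcal{M}(m')=(\alpha'_\mathbf{C}(\kappa'),[\omega])$, equality of the $\Omega$-components forces $\alpha'_\mathbf{C}(\omega_{X'})$ and $\alpha_\mathbf{C}(\omega_X)$ to span the same line, so $\phi_\mathbf{C}(\omega_{X'})\in\mathbf{C}\,\omega_X$; since $H^{2,0}$ is spanned by the holomorphic two-form and the remaining Hodge pieces are recovered from it by complex conjugation and orthogonality, $\phi$ is a Hodge isometry of the integral lattices. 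Equality of the $L_\mathbf{R}$-components gives $\alpha_\mathbf{C}(\kappa)=\alpha'_\mathbf{C}(\kappa')$, hence $\phi(\kappa')=\kappa$; in particular $\phi$ carries the Kähler class $\kappa'$ of $X'$ to the Kähler class $\kappa$ of $X$, both of which are genuine Kähler classes because $(X,\kappa)$ and $(X',\kappa')$ are marked pairs.

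Next I would invoke the strong Torelli theorem in the form: a Hodge isometry $H^2(X',\mathbf{Z})\to H^2(X,\mathbf{Z})$ that sends a Kähler class of $X'$ to a Kähler class of $X$ is induced by a unique biholomorphism $f\colon X\to X'$, that is, $f^*=\phi$. Applied to our $\phi$, this yields such an $f$. From $f^*=\phi=\alpha^{-1}\circ\alpha'$ we get $\alpha\circ f^*=\alpha'$, so $f$ is an isomorphism of \emph{marked} K3 surfaces; and $f^*(\kappa')=\phi(\kappa')=\kappa$, so $f$ also matches the chosen Kähler classes. Hence $(X,\kappa;\alpha)$ and $(X',\kappa';\alpha')$ define the same point of $\mathcal{M}$, and $\tau_\mathcal{M}$ is injective.

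The real content, and the step I expect to be the main obstacle, is the strong Torelli theorem itself, for which I would cite \cite[Theorem VIII.12.3]{BHPV} (or the treatment in \cite[\S VIII]{BHPV}). Its proof rests on three ingredients one would have to develop: (i) the \emph{weak} Torelli theorem, that a Hodge isometry between the second cohomology lattices of two K3 surfaces guarantees the surfaces are isomorphic, although the given isometry is realized by a geometric map only after composing with $\pm\mathrm{id}$ and a product of reflections in classes of square $-2$; (ii) the description of the Kähler cone of a K3 surface as a single chamber, cut out of the positive cone by the hyperplanes orthogonal to the effective $(-2)$-classes, on which the Weyl group generated by those reflections acts simply transitively; and (iii) the observation that a Hodge isometry which already sends a Kähler class to a Kähler class must send the whole Kähler chamber to the Kähler chamber, so no Weyl-group correction is needed and the isometry is already geometric. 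It is worth noting where the open subset $(K\Omega)^0\subset K\Omega$ enters: the condition $\langle\kappa,d\rangle\neq 0$ for every $(-2)$-class $d$ orthogonal to $\omega$ is precisely what places $\kappa$ in the interior of a chamber, i.e.\ what is compatible with $\kappa$ being an honest Kähler class rather than a boundary limit, which is exactly the input needed for ingredient (iii).
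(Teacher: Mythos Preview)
Your argument is correct and is exactly the standard derivation of injectivity of the refined period map from the strong Torelli theorem, as found in \cite[Theorem~VIII.12.3]{BHPV}. The paper itself gives no proof of this statement at all; it simply records the result with a citation to \cite{BHPV}, so there is nothing further to compare.
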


We also have a surjectivity result due to Todorov (see \cite{Todorov}):

\begin{theorem}\label{T:tauSurjective}(cf. \cite[Theorem VIII.14.1]{BHPV})
The refined period map $\tau_\mathcal{M}$ is surjective.
\end{theorem}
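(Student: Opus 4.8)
The plan is to reproduce the classical twistor-space argument of Todorov (see also Siu and Looijenga), in the form presented in \cite[\S VIII.14]{BHPV}. Write $D = \tau_\mathcal{M}(\mathcal{M}) \subseteq (K\Omega)^0$ for the image; it is nonempty, since a smooth quartic in $\mathbf{P}^3$ with a marking and a Kähler class gives a marked pair. I would establish three facts: (i) $D$ is open in $(K\Omega)^0$; (ii) $D$ is a union of \emph{twistor lines}; and (iii) every point of $(K\Omega)^0$ can be joined to $D$ by a finite chain of twistor lines lying in $(K\Omega)^0$. Together these force $D = (K\Omega)^0$.

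For (i), I use the commutative square relating $\tau_\mathcal{M}$, $\tau_\mathcal{N}$ and the forgetful maps. The forgetful map $\mathcal{M}\to\mathcal{N}$ is a fibre bundle whose fibre over a marked K3 surface is its Kähler cone, and the fibre bundle $(K\Omega)^0\to\Omega$ has as fibre over $[\omega]$ an open subset of $E(\omega)^\perp \otimes \mathbf{R}$; on fibres $\tau_\mathcal{M}$ is the restriction of the $\mathbf{R}$-linear isomorphism induced by the marking, hence an open embedding, while on the base $\tau_\mathcal{N}$ is a local isomorphism by the local Torelli theorem (via the Kuranishi family). So $\tau_\mathcal{M}$ is a local homeomorphism, and since it is injective by Theorem~\ref{T:tauInjective} it is an open embedding; in particular $D$ is open.

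For (ii), let $(X,\kappa)$ be a marked pair with period point $[\omega]$. Rescaling $\omega$ and $\kappa$ (which changes neither the point of $\Omega$ nor the property of being a marked pair, the Kähler cone being an open cone), arrange that $\mathrm{Re}\,\omega$, $\mathrm{Im}\,\omega$, $\kappa$ are mutually orthogonal of equal positive square, so $W = \mathbf{R}\,\mathrm{Re}\,\omega \oplus \mathbf{R}\,\mathrm{Im}\,\omega \oplus \mathbf{R}\,\kappa$ is a positive-definite oriented $3$-plane in $L_\mathbf{R}$. By Yau's solution of the Calabi conjecture, $\kappa$ is represented by a Ricci-flat Kähler metric, which promotes $X$ to a hyperkähler manifold and hence gives a twistor family $\mathcal{X}\to\mathbf{P}^1$ whose members are the complex structures in the $2$-sphere of the hyperkähler triple; for each such structure the span of the real and imaginary parts of its holomorphic $2$-form together with its distinguished Kähler class is again $W$. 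Marking the members compatibly, the refined period points of $\mathcal{X}\to\mathbf{P}^1$ sweep out the twistor line
\[ T_W = \{(\kappa',[\omega']) \in K\Omega : \mathbf{R}\,\mathrm{Re}\,\omega' \oplus \mathbf{R}\,\mathrm{Im}\,\omega' \oplus \mathbf{R}\,\kappa' = W \text{ with matching orientation}\}, \]
so $T_W \cap (K\Omega)^0 \subseteq D$; and each $(-2)$-class $d$ orthogonal to $\omega'$ for some point of $T_W$ imposes only the proper closed condition $\langle\kappa',d\rangle\neq 0$ on $T_W$, so $T_W$ does meet $(K\Omega)^0$.

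For (iii), fix $(\kappa_0,[\omega_0]) \in (K\Omega)^0$ with associated $3$-plane $W_0$ (after rescaling) and pick a point $p\in D$ with associated $3$-plane $W_p$. Since the manifold of positive-definite oriented $3$-planes in $L_\mathbf{R}$ is connected, interpolate between $W_0$ and $W_p$; subdividing finely enough that consecutive planes meet in a common positive $2$-plane, one obtains a chain $W_0,W_1,\dots,W_r=W_p$ in which consecutive twistor lines $T_{W_j},T_{W_{j+1}}$ share a point of $(K\Omega)^0$ and each $T_{W_j}$ meets $(K\Omega)^0$; openness of $D$ guarantees that the chain may be chosen with $T_{W_r}$ meeting $D$. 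By (ii), membership in $D$ then propagates from $T_{W_r}$ back along the chain to $(\kappa_0,[\omega_0])$. The genuinely delicate part is this connectivity statement: one must keep $\langle\kappa',\kappa'\rangle>0$ and avoid the countably many $(-2)$-walls all along the chain, and must track the continuously varying component $C^+_\omega$ so that the twistor segments really do carry Kähler, rather than anti-Kähler, classes. Everything else is a formal consequence of local Torelli and the hyperkähler rotation trick.
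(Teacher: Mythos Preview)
The paper does not supply its own proof of this theorem: it is stated as a known result, attributed to Todorov and cross-referenced to \cite[Theorem~VIII.14.1]{BHPV}, with no argument given in the text.  Your sketch is precisely the standard twistor-line argument that appears in those references (local Torelli for openness, hyperk\"ahler rotation via Yau's theorem to fill out each twistor line, and a chain-of-$3$-planes connectivity argument to reach an arbitrary point of $(K\Omega)^0$), so there is nothing to compare against and your outline is in line with the cited source.

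One small caution on your step~(iii): as you yourself flag, the connectivity argument is the nontrivial part, and your formulation (``subdividing finely enough that consecutive planes meet in a common positive $2$-plane'') is not quite how the argument runs in \cite{BHPV}.  Two generic positive $3$-planes in $L_\mathbf{R}$ need not share a $2$-plane, no matter how close they are; rather, one shows that any point of $(K\Omega)^0$ lies on a twistor line which meets the open set~$D$, by perturbing the auxiliary K\"ahler direction within the positive cone so that the resulting $3$-plane avoids all $(-2)$-classes and the associated twistor sphere hits the already-known-open image.  If you want a self-contained writeup you should replace the ``subdivision'' picture with this density/perturbation statement.
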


\begin{remark}
In \cite{Nikulin}, Nikulin uses a moduli space of K\"{a}hler K3 surfaces which has two components; by working with marked pairs, we have essentially fixed our choice of component. 
\end{remark}


We now describe moduli spaces which will parametrize possible resolutions $Y$.

\begin{definition}\cite[Definition 2.1]{Nikulin}
A \emph{condition} $T$ is a primitive sublattice $M$ of $L$ and a finite subset $\{c_i\}$ of $M$ such that $c_i^2 = -2$ for each $i$.  
\end{definition}

We work with conditions $T$ where $M$ is negative definite.

\begin{definition}\cite[Definition 2.2]{Nikulin}
A \emph{marked K3 surface with condition} $T$ is a K3 surface $Y$ together with an isometry $\alpha: H^2(X,\mathbf{Z}) \to L$ such that $\alpha^{-1}(M) \subset H^{1,1}(Y)$ and $\alpha^{-1}(c_i)$ is represented by a nonsingular rational curve on $Y$ for each $i$.  
\end{definition}

\begin{remark}
A nonsingular rational curve with self-intersection $-2$ in a K3 surface is uniquely determined by its homology class. (cf. \cite[Proposition VIII.3.7]{BHPV})  We will often identify the cohomology classes $\alpha^{-1}(c_i)$ with the corresponding curves.
\end{remark}

\begin{definition}
A \emph{marked pair with condition} $T$ is a marked pair $(Y,\kappa)$ such that $Y$ is a marked K3 surface with condition $T$.
\end{definition}

Any marked pair with condition $T$ must satisfy $\langle \kappa, \alpha^{-1}(c_i) \rangle > 0$ for each $i$, because $\alpha^{-1}(c_i)$ is represented by a nonsingular rational curve.

\begin{definition}
Let $\mathcal{M}_T$ be the subspace of $\mathcal{M}$ corresponding to the marked pairs with condition $T = \{c_j\} \subset M \subset L$.  Let $\mathcal{N}_T$ be the image of $\mathcal{M}_T$ under the forgetful map.
\end{definition}

\begin{remark}
Note that by taking $M$ to be isomorphic to a primitive sublattice of $H^2(Y,\mathbf{Z})$, we have fixed the primitive embedding of $M$ in $L$ up to automorphisms of $L$.  
\end{remark}

Let $(K\Omega)^0_M$ be the subset of $(K\Omega)^0$ given by the refined period points $(\kappa,[\omega])$ such that $M$ is contained in the perpendicular complement of $\omega$ and $\bar{\omega}$.  Suppose $m \in \mathcal{M}$ corresponds to a marked K3 surface $(Y;\alpha)$ with K\"{a}hler class $\kappa$, and suppose $\tau_\mathcal{M}(m) = (\kappa,[\omega])$.  Let $\Delta_m$ be the set given by $\{\delta \in \alpha_\mathbf{C}(H^{1,1}(X)) \,|\, \langle \delta,\delta \rangle = -2\}$, and let $\Delta_m^+$ be the subset of $\Delta_m$ given by $\langle \kappa,\delta \rangle >0$.  Let $(K\Omega)^0_T$ be the subset of $(K\Omega)^0_M$ such that $c_i \in \Delta_m^+$ and $c_i$ is an irreducible element of $\Delta_m^+$ for each $i$.  The following proposition follows immediately.

\begin{proposition}\label{P:MtoKOmega}\cite[Proposition 2.8]{Nikulin}
The point $m \in \mathcal{M}_T$ if and only if $\tau_\mathcal{M}(m) \in (K\Omega)^0_T$.
\end{proposition}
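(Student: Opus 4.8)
The plan is to establish the equivalence directly, by unwinding the definitions of $\mathcal{M}_T$ and $(K\Omega)^0_T$ and matching their clauses one at a time, using only the standard dictionary between the Hodge structure of a K3 surface and its geometry (Riemann-Roch, adjunction, and the description of $(-2)$-curves) as recalled in \cite[\S VIII]{BHPV}. Fix $m \in \mathcal{M}$ corresponding to a marked K3 surface $(Y;\alpha)$ with K\"ahler class $\kappa$, and let $[\omega] = [\alpha_\mathbf{C}(\omega_Y)]$ be its period point, so that $\tau_\mathcal{M}(m) = (\alpha_\mathbf{C}(\kappa),[\omega])$. By definition $m \in \mathcal{M}_T$ precisely when (i) $\alpha^{-1}(M) \subseteq H^{1,1}(Y)$ and (ii) for each $i$ the class $\alpha^{-1}(c_i)$ is represented by a nonsingular rational curve on $Y$. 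I would show that (i) is equivalent to $\tau_\mathcal{M}(m) \in (K\Omega)^0_M$ and that, granting (i), condition (ii) is equivalent to requiring each $c_i$ to be an irreducible element of $\Delta_m^+$; since $(K\Omega)^0_T$ is by definition the locus in $(K\Omega)^0_M$ cut out by that requirement, combining the two equivalences finishes the proof.

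For (i): the subspace $H^{1,1}(Y) \subset H^2(Y,\mathbf{C})$ is the orthogonal complement, under the cup product, of $H^{2,0}(Y) \oplus H^{0,2}(Y) = \mathbf{C}\,\omega_Y \oplus \mathbf{C}\,\bar\omega_Y$. Since $\alpha$ is an isometry and $M \subset L$ is an integral, hence real, sublattice, the inclusion $\alpha^{-1}(M) \subseteq H^{1,1}(Y)$ holds if and only if $M \perp \omega$, equivalently $M \perp \omega$ and $M \perp \bar\omega$; this is exactly the defining condition of $(K\Omega)^0_M$. I would also note that under (i) one has $\alpha^{-1}(M) \subseteq H^{1,1}(Y) \cap H^2(Y,\mathbf{Z}) = \mathrm{Pic}(Y)$, so that $\Delta_m$ consists of the $\alpha_\mathbf{C}$-images of the classes of $\mathrm{Pic}(Y)$ of self-intersection $-2$.

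For (ii), assuming (i): each $\alpha^{-1}(c_i)$ lies in $\mathrm{Pic}(Y)$ with self-intersection $-2$. By Riemann-Roch on the K3 surface $Y$, exactly one of $\pm\alpha^{-1}(c_i)$ is effective, and since $\langle\alpha_\mathbf{C}(\kappa),c_i\rangle = \langle\kappa,\alpha^{-1}(c_i)\rangle$ while a K\"ahler class pairs strictly positively with every effective curve, $\alpha^{-1}(c_i)$ is effective exactly when $\langle\alpha_\mathbf{C}(\kappa),c_i\rangle > 0$, i.e. when $c_i \in \Delta_m^+$. Granting effectivity, $\alpha^{-1}(c_i)$ is represented by a nonsingular rational curve if and only if it is the class of an irreducible curve — by adjunction an irreducible curve of self-intersection $-2$ on a K3 surface is smooth and rational, and conversely — which is precisely the statement that $c_i$ is an irreducible element of $\Delta_m^+$.

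The one step carrying genuine content, rather than being a formal translation, is this last clause: one must know that the irreducible elements of $\Delta_m^+$ are exactly the classes of nonsingular rational curves on $Y$, so that a reducible effective $(-2)$-divisor never has an irreducible class and a smooth rational curve always does. This is part of the classical theory of $(-2)$-curves on K3 surfaces underlying Nikulin's definitions; once it is taken as known, the proposition follows immediately from the constructions preceding it, as the statement claims.
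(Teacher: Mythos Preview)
Your proposal is correct and matches the paper's approach: the paper simply states that the proposition ``follows immediately'' from the definitions preceding it, and your argument is precisely the unwinding of those definitions---matching $\alpha^{-1}(M)\subseteq H^{1,1}(Y)$ with the condition defining $(K\Omega)^0_M$, and the representability of $\alpha^{-1}(c_i)$ by a nonsingular rational curve with the irreducibility of $c_i$ in $\Delta_m^+$. There is nothing to add; you have made explicit exactly what the paper leaves to the reader.
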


\begin{proposition}\label{P:partitionComponents}\cite[Proposition 2.9]{Nikulin}
Let $M$ be a negative definite lattice with $\mathrm{rank}\;M \leq 19$.  Then $(K\Omega)^0_M$ is a closed smooth complex subspace of $(K\Omega)^0$.  Furthermore, $(K\Omega)^0_M$ is connected, and $(K\Omega)^0_M - (K\Omega)^0_T$ is a closed subset of $(K\Omega)^0_M$ which is the union of at most countably many closed complex subspaces of $(K\Omega)^0_M$.
\end{proposition}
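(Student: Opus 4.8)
The plan is to reduce the whole statement to the geometry of the period domain of $M^{\perp}$, the orthogonal complement of $M$ in $L$. Since $M$ is negative definite of rank $r := \mathrm{rank}\,M \le 19$, the lattice $M^{\perp}$ is nondegenerate of signature $(3,19-r)$, so $M^{\perp}\otimes\mathbf{R}$ contains a positive-definite $2$-plane; put $\Omega_M = \Omega\cap\mathbf{P}(M^{\perp}\otimes\mathbf{C})$. The first observation is that for $(\kappa,[\omega])$ the condition ``$M$ is orthogonal to $\omega$ and $\bar\omega$'' is the same as ``$E(\omega)\subseteq M^{\perp}\otimes\mathbf{R}$'', i.e.\ ``$[\omega]\in\Omega_M$''; hence $(K\Omega)^0_M$ is the preimage of $\Omega_M$ under the bundle projection $\pi\colon(K\Omega)^0\to\Omega$. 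Now $\Omega_M$ is cut out of $\Omega$ by the linear equations $\langle m_1,\omega\rangle=\cdots=\langle m_r,\omega\rangle=0$ for a basis $m_1,\dots,m_r$ of $M$, so it is a closed analytic subset of $\Omega$; since the form stays nondegenerate on $M^{\perp}\otimes\mathbf{C}$ the quadric $\{\langle\omega,\omega\rangle=0\}$ there is smooth, so $\Omega_M$ is a smooth closed complex submanifold of $\Omega$ of dimension $20-r$; and $\Omega_M$ is connected, by the same transitivity argument (for the action of $O(M^{\perp}\otimes\mathbf{R})$, with sufficiently connected isotropy) that establishes connectedness of the full signature-$(3,19)$ period domain $\Omega$. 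As $\pi$ is a real-analytic fiber bundle, pulling these properties back along $\pi$ shows that $(K\Omega)^0_M=\pi^{-1}(\Omega_M)$ is a closed smooth complex subspace of $(K\Omega)^0$, of complex codimension $r$; this is the first assertion.

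For connectedness of $(K\Omega)^0_M$ I would pass through moduli. By Theorems~\ref{T:tauInjective} and~\ref{T:tauSurjective} the refined period map is a bijection $\tau_{\mathcal{M}}\colon\mathcal{M}\to(K\Omega)^0$, under which $(K\Omega)^0_M$ corresponds to the set of marked pairs $(Y,\kappa)$ whose period point lies in $\Omega_M$, equivalently those with $\alpha^{-1}(M)\subseteq\mathrm{Pic}(Y)$. This set fibers over the family of such marked K3 surfaces with connected fibers --- the K\"ahler cones --- and that family surjects onto the connected base $\Omega_M$ by surjectivity of periods; the standard fact that the non-Hausdorff points of the marked moduli space do not disconnect it then gives connectedness, so $(K\Omega)^0_M$ is connected. (Directly: $(K\Omega)^0_M$ fibers over the connected $\Omega_M$, and over the complement of the residual locus treated below the fiber is a union of Weyl chambers of the root system of $M$, permuted transitively by monodromy around that locus, whose components have real codimension at least two.)

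It remains to analyze $(K\Omega)^0_M\setminus(K\Omega)^0_T$. By definition a point $m=(\kappa,[\omega])$ lies in $(K\Omega)^0_T$ iff for every $i$ the class $c_i$ lies in $\Delta_m^+$ and is an irreducible element of it, i.e.\ is a simple root of the root system of $\mathrm{Pic}(Y_m)$ for the Weyl chamber containing $\kappa$. Since $c_i\in M$ we have $\langle\omega,c_i\rangle=0$ on all of $(K\Omega)^0_M$, and --- as $(K\Omega)^0$ has by construction the loci $\langle\kappa,\delta\rangle=0$ ($\delta$ a $(-2)$-class with $\delta\perp\omega$) removed --- the number $\langle\kappa,c_i\rangle$ never vanishes there; so $\{c_i\notin\Delta_m^+\}=\{\langle\kappa,c_i\rangle<0\}$ is open and closed, hence (as $(K\Omega)^0_M$ is connected) either empty or everything, and in either case a closed complex subspace. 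For the rest, recall the standard root-system fact that a non-simple positive root is a sum of two positive roots; applied here it says that $c_i\in\Delta_m^+$ fails to be simple precisely when there is a $(-2)$-class $\delta\in L$ with $\langle\omega,\delta\rangle=0$, $\langle c_i,\delta\rangle=-1$ (so $c_i-\delta$ is again a $(-2)$-class orthogonal to $\omega$), and $\langle\kappa,\delta\rangle>0$, $\langle\kappa,c_i-\delta\rangle>0$. For each such $\delta$ --- there are countably many, indexed by the $(-2)$-classes with $\langle c_i,\delta\rangle=-1$ --- the set $\Omega_{M,\delta}:=(K\Omega)^0_M\cap\{\langle\omega,\delta\rangle=0\}$ is a closed complex subspace, being the zero locus there of the holomorphic section $\langle\omega,\delta\rangle$; and on $\Omega_{M,\delta}$ both $\delta$ and $c_i-\delta$ are $(-2)$-classes orthogonal to $\omega$, so by the same construction of $(K\Omega)^0$ neither $\langle\kappa,\delta\rangle$ nor $\langle\kappa,c_i-\delta\rangle$ vanishes there, and the two positivity conditions carve out a union of connected components of $\Omega_{M,\delta}$ --- again a closed complex subspace. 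Taking the union over $i$ (a finite set) and over these $\delta$ exhibits $(K\Omega)^0_M\setminus(K\Omega)^0_T$ as a countable union of closed complex subspaces.

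That this union is a closed subset --- the last point --- is where I expect to have to work a little, and the argument is local finiteness of the family of constituent subspaces rather than openness of $(K\Omega)^0_T$. Near a fixed point $(\kappa_0,[\omega_0])$, any $\delta$ that contributes must have $|\langle\kappa_0,\delta\rangle|$ bounded (forced by $0<\langle\kappa,\delta\rangle<\langle\kappa,c_i\rangle$) and $|\langle\omega_0,\delta\rangle|$ small (since $\delta\perp\omega$ for some nearby $\omega$), while $\delta^2=-2$; since $\mathbf{R}\kappa_0$ together with $E(\omega_0)$ spans a maximal positive-definite subspace of $L\otimes\mathbf{R}$, such $\delta$ are confined to a bounded region of $L$ and hence are finite in number. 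So only finitely many of the constituent subspaces meet a neighborhood of any given point, and the union is closed. The principal obstacle, then, is exactly this local-finiteness estimate together with the bookkeeping that makes each constituent an honest closed complex subspace --- both turning crucially on the fact that, in passing to $(K\Omega)^0$, one has already deleted the relevant reflection hyperplanes $\langle\kappa,\delta\rangle=0$.
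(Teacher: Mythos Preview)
The paper does not prove this proposition at all; it is imported wholesale from Nikulin as \cite[Proposition 2.9]{Nikulin} and then used as a black box in the short proof of Theorem~\ref{T:components}. There is therefore nothing in the present paper to compare your argument against.

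On its own merits: your reduction to the linear slice $\Omega_M=\Omega\cap\mathbf{P}(M^{\perp}_{\mathbf C})$ and the identification $(K\Omega)^0_M=\pi^{-1}(\Omega_M)$ are correct, and the smoothness/closedness of $\Omega_M$ follows as you say from nondegeneracy of the form on $M^{\perp}$. Your treatment of $(K\Omega)^0_M\setminus(K\Omega)^0_T$ --- writing it as a union over $(-2)$-classes $\delta$ of the closed loci $\{\langle\omega,\delta\rangle=0\}$ intersected with sign conditions that are locally constant on $(K\Omega)^0$, and then the local-finiteness estimate via the positive-definite $3$-space spanned by $\kappa_0$ and $E(\omega_0)$ --- is the right shape and is essentially how Nikulin argues.

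The weak point is connectedness of $(K\Omega)^0_M$ itself. Your argument (a) via $\mathcal{M}$ invokes a ``standard fact that the non-Hausdorff points of the marked moduli space do not disconnect it,'' which is neither standard nor obvious in the form you need: the non-Hausdorff fibers of $\mathcal{N}\to\Omega$ over $\Omega_M$ are exactly the Weyl chambers you are trying to connect, so this begs the question. Your argument (b) asserts that monodromy in $\Omega_M$ around the special loci permutes the Weyl chambers of $M$ transitively, but note that every root $\delta\in M$ is orthogonal to \emph{every} $[\omega]\in\Omega_M$, so the hyperplane $\langle\kappa,\delta\rangle=0$ is removed from \emph{every} fiber and there is no ``divisor in $\Omega_M$'' around which a loop realizes the reflection $s_\delta$. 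One has to produce the reflections by a different mechanism (in Nikulin's setting, via auxiliary $(-2)$-classes not lying in $M$ whose orthogonality locus in $\Omega_M$ is a genuine hypersurface), and this is where the actual content of the proof lies.
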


\begin{theorem}\label{T:components}
$\mathcal{M}_T$ is path-connected.
\end{theorem}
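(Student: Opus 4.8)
\emph{Proof proposal.} The plan is to transport the statement through the refined period map to the space $(K\Omega)^0_T$, where Proposition~\ref{P:partitionComponents} gives us control, and then to invoke the standard fact that removing a countable union of proper complex-analytic subsets from a connected complex manifold does not disconnect it. By Theorems~\ref{T:tauInjective} and~\ref{T:tauSurjective} the refined period map $\tau_\mathcal{M}\colon \mathcal{M}\to (K\Omega)^0$ is bijective, and since it is moreover a local isomorphism of real-analytic manifolds by the local Torelli theorem, it is a homeomorphism; by Proposition~\ref{P:MtoKOmega} it then restricts to a homeomorphism between $\mathcal{M}_T$ and $(K\Omega)^0_T$. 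Thus it suffices to show that $(K\Omega)^0_T$ is path-connected. If $(K\Omega)^0_T=\emptyset$ this is vacuous (and $\mathcal{M}_T=\emptyset$), so assume $(K\Omega)^0_T\neq\emptyset$.

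By Proposition~\ref{P:partitionComponents}, $(K\Omega)^0_M$ is a connected complex manifold and its subset $(K\Omega)^0_M\setminus (K\Omega)^0_T=\bigcup_j W_j$ is closed and is a countable union of closed complex-analytic subsets $W_j$. Since this union is a proper subset of $(K\Omega)^0_M$, each $W_j$ is a proper analytic subset of a connected complex manifold, hence is nowhere dense of complex codimension at least $1$, i.e.\ of real codimension at least $2$. Consequently $\bigcup_j W_j$ is a closed meagre set, so $(K\Omega)^0_T$ is open (and dense) in $(K\Omega)^0_M$; being open in a manifold it is locally path-connected, and it remains only to show that it is connected.

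For this, let $p,q\in (K\Omega)^0_T$ and join them by a smooth path $\gamma$ in the connected manifold $(K\Omega)^0_M$. For each $j$, stratify $W_j$ into complex submanifolds, each of real codimension at least $2$ in $(K\Omega)^0_M$; by Thom transversality a path that is generic relative to each stratum is transverse to it, and a $1$-dimensional set transverse to a subset of real codimension at least $2$ is disjoint from it. Hence for each $j$ the set of smooth paths from $p$ to $q$ that avoid $W_j$ is open and dense in the space of smooth paths from $p$ to $q$, which is a Baire space; intersecting over the countably many $j$ produces a path from $p$ to $q$ lying entirely in $(K\Omega)^0_T$. Transporting this path back through $\tau_\mathcal{M}^{-1}$ gives the desired path in $\mathcal{M}_T$. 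I expect this last transversality-and-Baire-category step to be the main obstacle: one needs the $W_j$ to have real codimension at least $2$ — which is exactly where complex, as opposed to merely real, analyticity of the $W_j$ is used — and one needs the relevant path space to be a Baire space so that the countable intersection is non-empty.
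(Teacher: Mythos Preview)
Your proof is correct and follows essentially the same route as the paper: reduce via Proposition~\ref{P:MtoKOmega} and the bijectivity of $\tau_\mathcal{M}$ to showing $(K\Omega)^0_T$ is path-connected, then appeal to Proposition~\ref{P:partitionComponents}. The paper's proof asserts this last implication in a single line, whereas you spell out the underlying transversality/Baire argument and are more careful about why $\tau_\mathcal{M}$ is a homeomorphism; the strategy is the same.
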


\begin{remark}
Nikulin proved a variant of Theorem~\ref{T:components} under the assumption that $\mathrm{rank}~M \leq 18$ by constructing paths between elements (see \cite[Theorem 2.10]{Nikulin}).  We give a quick argument using the bijectivity of the refined period map $\tau_\mathcal{M}$.  
\end{remark}

\begin{proof}
Proposition~\ref{P:partitionComponents} implies that $(K\Omega)^0_T$ is connected and path-connected.  Since the refined period map $\tau_\mathcal{M}$ is injective and surjective, Proposition~\ref{P:MtoKOmega} implies that $\mathcal{M}_T$ is also path-connected.
\end{proof}

\begin{corollary}
$\mathcal{N}_T$ is connected and path-connected.
\end{corollary}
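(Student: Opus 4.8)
The plan is to deduce this immediately from Theorem~\ref{T:components} together with elementary point-set topology, since $\mathcal{N}_T$ is by definition the image of $\mathcal{M}_T$ under the forgetful map $\mathcal{M} \to \mathcal{N}$. First I would recall that the forgetful map from the universal space of marked pairs to the universal space of marked K3 surfaces is continuous; this is part of the structure of these moduli spaces as recalled from \cite[\S VIII]{BHPV}, so no new work is needed here. Restricting this continuous map to the subspace $\mathcal{M}_T$ and taking its image (with the subspace topology inherited from $\mathcal{N}$) gives $\mathcal{N}_T$ together with a continuous surjection $\mathcal{M}_T \twoheadrightarrow \mathcal{N}_T$.

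Next I would invoke the standard fact that the continuous image of a path-connected space is path-connected: given two points of $\mathcal{N}_T$, lift them to points of $\mathcal{M}_T$, use Theorem~\ref{T:components} to join the lifts by a path in $\mathcal{M}_T$, and push that path forward to obtain a path in $\mathcal{N}_T$. Since any path-connected space is connected, this also yields connectedness of $\mathcal{N}_T$, completing the argument.

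I do not expect any genuine obstacle here; the only point requiring a word of care is the continuity of the forgetful map and the fact that $\mathcal{N}_T$ carries the quotient/subspace topology making this map continuous, both of which are built into the construction of $\mathcal{M}$, $\mathcal{N}$, and $\mathcal{M}_T$ reviewed above. Thus the corollary is essentially a formal consequence of Theorem~\ref{T:components}, and the proof will be only a couple of sentences.

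\begin{proof}
The space $\mathcal{N}_T$ is the image of $\mathcal{M}_T$ under the forgetful map $\mathcal{M} \to \mathcal{N}$, which is continuous. By Theorem~\ref{T:components}, $\mathcal{M}_T$ is path-connected, and the continuous image of a path-connected space is path-connected; hence $\mathcal{N}_T$ is path-connected, and therefore connected.
\end{proof}
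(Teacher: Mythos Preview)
Your proposal is correct and matches the paper's intended argument: the corollary is stated without proof immediately after Theorem~\ref{T:components}, and the implicit reasoning is exactly that $\mathcal{N}_T$ is by definition the image of the path-connected space $\mathcal{M}_T$ under the continuous forgetful map. There is nothing to add or change.
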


Let $G$ be a group which admits a symplectic action on some marked K3 surface $(X_\nu;\beta_\nu)$.  Let $\dot{G}$ be the group which has the same elements of $G$, but where the group operation is written in the reverse order.  Then $\dot{G}$ acts on $H^2(X, \mathbf{Z})$, and we may use $\beta_0$ to define an embedding $\phi: \dot{G} \hookrightarrow O(L)$.  In the following discussion, we treat the group $G$ and the embedding $\phi$ as fixed.

\begin{definition}\cite[Definition 4.9]{Nikulin}
A \emph{marked K3 surface with symplectic automorphism group $G$ and action $\phi$} on the integral cohomology is a triple $(X,i;\beta)$ such that $(X;\beta)$ is a marked K3 surface, $i: G \hookrightarrow \text{Aut}(X)$ is an embedding where $G$ acts symplectically on $X$, and 
\[\beta \cdot i(g)^* \cdot \beta^{-1} = \phi(g)\]
for any $g \in G$.  We say that two such triples $(X,i;\beta)$ and $(X',i';\beta')$ are \emph{isomorphic} if there exists an isomorphism $t: X \to X'$ such that $\beta' = \beta \cdot t^*$ and $i'(g) = t \cdot i(g) \cdot t^{-1}$ for any $g \in G$.
\end{definition}

\begin{definition}
A \emph{marked pair with symplectic automorphism group $G$ and action $\phi$} is a triple $(X,i;\beta)$ which is a marked K3 surface with symplectic automorphism group $G$ and action $\phi$ together with a K\"{a}hler class $\kappa$ such that $(X, \kappa)$ is a marked pair.  We say that two such pairs $(X, \kappa)$ and $(X', \kappa')$ are \emph{isomorphic} if there exists an isomorphism $t: X \to X'$ such that $t^*(\kappa') = \kappa$ and the underlying triples $(X,i;\beta)$ and $(X',i';\beta')$ are isomorphic.
\end{definition}

\begin{definition}
Let $\mathcal{M}_{S_G}$ be the subspace of $\mathcal{M}$ corresponding to the marked pairs with the condition $R$ given by $\{\} \subset S_G \subset L$.  Let $\mathcal{N}_{S_G}$ be the image of $\mathcal{M}_{S_G}$ under the forgetful map.
\end{definition}

Given a marked pair with symplectic automorphism group $G$ and action $\phi$, we may obtain a marked pair with condition $R$.  The global Torelli theorem for K3 surfaces implies that two marked pairs with symplectic automorphism group $G$ and action $\phi$ correspond to the same marked pair with condition $R$ if and only if they are isomorphic.  Let $\mathcal{M}_{G,\phi}$ be the subspace of $\mathcal{M}_{S_G}$ corresponding to marked pairs with symplectic automorphism group $G$ and action $\phi$; by \cite[Theorem 4.10]{Nikulin}, $\mathcal{M}_{G,\phi}$ is open in $\mathcal{M}_{S_G}$.  Let $\mathcal{N}_{G,\phi}$ be the image of $\mathcal{M}_{G,\phi}$ under the forgetful map, and let $\mathcal{X}_{G,\phi}$ be the subset of the universal family $\mathcal{X}$ of marked K3 surfaces lying over $\mathcal{M}_{G,\phi}$.


Consider the minimal resolution $Y_\nu$ of the quotient $X_\nu/G$.  Let $\Delta = \{C_i\}$ be the exceptional curves of $Y_\nu$.

\begin{definition}\cite[\S 4]{ShiZhang}
We say that a simple normal crossing divisor $\Delta$ on a K3 surface Y is an \emph{ADE configuration
of smooth rational curves}, or, more briefly, an \emph{ADE configuration}, if each irreducible component of $\Delta$ is a smooth rational curve and the intersection matrix of the irreducible components of $\Delta$ is a direct sum of the Cartan matrices of type $A_l$, $D_m$ or $E_n$.  (We take these matrixes to be negative definite.)
\end{definition}

Fix a marking $\alpha_\nu$ of $Y_\nu$.  We may use the image $\alpha_\nu(\Delta)$ of the ADE configuration $\Delta$ to define a condition $T$; the marked K3 surface $(Y_\nu;\alpha_\nu)$ corresponds to a point in $\mathcal{N}_T$.  We wish to show that all points of $\mathcal{N}_T$ correspond to resolutions of symplectic quotients of K3 surfaces.  We will need the following classification of the covering spaces of the complements of ADE configurations of rational curves on K3 surfaces:

\begin{theorem}\label{T:shimadaZhangCampana}\cite[Proposition 4.1]{ShiZhang} \cite{Campana}
Let $\Delta$ be an ADE configuration of smooth rational curves on a K3 surface $Y$.  Let $Y' = Y - \Delta$, and let $X'$ be the universal covering space of $Y'$.  Then $X'$ and $\pi_1(X')$ satisfy one of the following conditions:

\begin{enumerate}
\item $X' \cong Y'$ and $\pi_1(Y')$ is trivial.
\item $X'$ is isomorphic to the complement of a discrete set of points $A$ in $\mathbf{C}^2$, and $\pi_1(Y')$ is infinite.  Furthermore, there exists a map $f$ from $\mathbf{C}^2 - A$ to a two-dimensional complex torus $T$ and a map $g$ from $T$ to $Y'$ such that $g$ is the quotient of $T$ by a finite group of automorphisms $\Gamma$ and $g \circ f$ is the covering map.
\item $X'$ is isomorphic to a K3 surface with a finite set of points removed, and the group of covering transformations (which is naturally isomorphic to $\pi_1(Y')$) acts symplectically on this surface.
\end{enumerate}
\end{theorem}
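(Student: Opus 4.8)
The plan is to reduce everything to a local analysis of the covering near the ends of $Y'$ together with the Enriques--Kodaira classification of complex surfaces with trivial canonical bundle. First I would contract $\Delta$: let $\mu\colon Y\to\bar Y$ be the contraction of the ADE configuration $\Delta$ to its rational double points $p_1,\dots,p_k$, so that $Y'=\bar Y\setminus\{p_1,\dots,p_k\}$. Since $\mu$ is crepant, the nowhere-vanishing holomorphic two-form on $Y$ descends to a nowhere-vanishing holomorphic two-form $\bar\omega$ on $\bar Y$, and pulling back along the universal covering $p\colon X'\to Y'$ gives a nowhere-vanishing holomorphic two-form $\omega'=p^*(\bar\omega|_{Y'})$ on $X'$ that is invariant under the group of deck transformations $\Gamma=\pi_1(Y')$. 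This is the structure I would exploit throughout.

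Next I would analyze the covering near the punctures. A small punctured neighborhood $U_i^*$ of $p_i$ in $Y'$ is analytically isomorphic to $(\mathbf{C}^2\setminus\{0\})/H_i$ for a finite subgroup $H_i\subset SL(2,\mathbf{C})$ acting freely on $\mathbf{C}^2\setminus\{0\}$, so $\pi_1(U_i^*)\cong H_i$. Because $X'$ is the universal cover, each connected component of $p^{-1}(U_i^*)$ is the cover of $U_i^*$ associated to $H_i':=\ker(H_i\to\Gamma)$, hence is isomorphic to $(\mathbf{C}^2\setminus\{0\})/H_i'$. Filling in the origin of each such component yields a normal analytic surface $\widehat X\supseteq X'$ in which $\widehat X\setminus X'$ is a discrete set of points, $\widehat X$ being singular (with a rational double point) exactly where $H_i'\neq\{1\}$; a van Kampen computation shows $\widehat X$ is simply connected, $\omega'$ extends to its smooth locus, and the minimal resolution $\widetilde X$ is a smooth simply connected surface with $K_{\widetilde X}=\mathcal O_{\widetilde X}$ carrying a nowhere-vanishing holomorphic two-form.

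Then I would split on whether $\Gamma$ is finite, which is equivalent to $X'$ (hence $\widetilde X$) having finite topological type. If $\Gamma$ is finite, $\widetilde X$ is a compact simply connected complex surface with trivial canonical bundle, hence a K3 surface (complex tori, Enriques, bielliptic and Kodaira surfaces are all excluded, either by $\pi_1$ or by nontriviality of the canonical bundle). Comparing Euler characteristics — $\chi(X')=|\Gamma|\,\chi(Y')$, the number $\sum_i|\Gamma|\,|H_i'|/|H_i|$ of filled-in points, and $\chi(\widetilde X)=24$, using the McKay correspondence to relate $|H_i|$ and $|H_i'|$ to the numbers of exceptional curves — forces every $H_i'$ to be trivial, so $\widehat X=\widetilde X$ is a smooth K3 and $X'$ is a K3 with a finite set of points removed. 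If in addition $\Gamma=\{1\}$ we have $X'=Y'$ and are in case (1); if $\Gamma\neq\{1\}$, every deck transformation extends across the punctures to an automorphism of the K3 $\widehat X$ fixing $\omega'$, hence fixing the up-to-scalar unique holomorphic two-form, so $\Gamma$ acts symplectically and we are in case (3). If instead $\Gamma$ is infinite, then $\widetilde X$ is a noncompact simply connected surface with trivial canonical bundle on which $\Gamma$ acts properly discontinuously with compact quotient; here I would invoke the structure theory of open surfaces of logarithmic Kodaira dimension zero together with Campana's classification \cite{Campana} to conclude $\widetilde X\cong\mathbf{C}^2$, so that $X'\cong\mathbf{C}^2\setminus A$ for a discrete set $A$. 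Since $\Gamma$ then acts on $\mathbf{C}^2$ by biholomorphisms, cocompactly, and preserving $dz_1\wedge dz_2$ up to a constant, a Bieberbach-type argument (equivalently, the classification of surfaces with universal cover $\mathbf{C}^2$) gives a finite-index subgroup $\Lambda\le\Gamma$ of translations; setting $T=\mathbf{C}^2/\Lambda$ and $\Gamma'=\Gamma/\Lambda$, taking $f$ to be the induced map and $g$ the quotient by $\Gamma'$ puts us in case (2).

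The hard part is the infinite-$\Gamma$ case: identifying $\widetilde X$ with $\mathbf{C}^2$ and extracting the translation structure of the $\Gamma$-action is precisely Campana's theorem, and it is substantially deeper than everything else — once the local models at the ends and the Enriques--Kodaira list are available, the finite case is essentially bookkeeping. A secondary technical point to watch is the Euler-characteristic argument in the finite case ruling out rational double points on $\widehat X$, which is what makes ``K3 with a finite set of points removed'' (rather than ``K3 with curves and points removed'') the correct conclusion there.
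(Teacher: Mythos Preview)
The paper does not prove this theorem; it is imported wholesale from \cite[Proposition~4.1]{ShiZhang} and \cite{Campana} and used as a black box in the proof of Theorem~\ref{T:allQuotients}. So there is no in-paper argument for you to be compared against. Your outline is essentially the strategy of those references: contract $\Delta$ to rational double points, lift the holomorphic two-form to the universal cover, compactify $X'$ by filling in the local quotient singularities at the ends, and then split on whether $\Gamma$ is finite (Enriques--Kodaira classification plus a counting argument) or infinite (Campana's structure theorem). You correctly identify the infinite case as the substantive part and attribute it to \cite{Campana}.

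One caution on the finite case. The assertion that the Euler-characteristic comparison together with McKay ``forces every $H_i'$ to be trivial'' is stated more confidently than it deserves as a one-line step. The equation $\chi(\widetilde X)=24$ yields a single numerical relation among the $|H_i|$, $|H_i'|$, and the exceptional-curve counts, and by itself this does not obviously exclude nontrivial $H_i'$. In the literature the conclusion is reached with additional input: once $\widetilde X$ is known to be a K3 surface on which $\Gamma$ acts symplectically, the quotient $\widehat X/\Gamma$ is $\bar Y$ and one compares the two resolutions (of $\widehat X$ and of $\bar Y$) using the known constraints on symplectic actions --- effectively feeding Xiao's formula (Proposition~\ref{P:Xiao} here) back into the count --- to pin down the local models. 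You flag this yourself as ``a secondary technical point to watch'', which is the right instinct; just do not expect it to fall out of a bare Euler-characteristic identity.
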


\begin{theorem}\label{T:allQuotients}
Suppose there exists $\nu \in \mathcal{N}_T$, corresponding to a marked K3 surface $(Y_{\nu}, \alpha_{\nu})$, such that $Y_{\nu}$ is the resolution of the quotient of a K3 surface $X_{\nu}$ by a symplectic $G$-action.  Let $q \in \mathcal{N}_T$, and let $(Y_q, \alpha_q)$ be the corresponding marked K3 surface.  Then there exists a K3 surface $X_q$ and a symplectic action of $G$ on $X_q$ such that $Y_q$ is a resolution of $X_q/G$.  
\end{theorem}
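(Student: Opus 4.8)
The plan is to show that for every $q \in \mathcal{N}_T$ the complement $Y_q - \Delta_q$ has fundamental group isomorphic to $G$, where $\Delta_q = \{\alpha_q^{-1}(c_i)\}$ is the ADE configuration on $Y_q$ cut out by the condition $T$, and then to recover $X_q$ by passing to the universal cover and invoking Theorem~\ref{T:shimadaZhangCampana}. At the given point $\nu$, write $Y_\nu' = Y_\nu - \Delta_\nu$ and recall that $\Delta_\nu$ is the full exceptional locus of $Y_\nu \to X_\nu/G$, so $Y_\nu' \cong (X_\nu/G) \setminus \{p_i\}$. A point of $X_\nu$ with nontrivial stabilizer $G_x$ has $G_x$ acting on $T_x X_\nu \cong \mathbf{C}^2$ through a nontrivial finite subgroup of $SL(2,\mathbf{C})$ (the action is symplectic), hence through a nontrivial finite subgroup of $SU(2)$, so its image in $X_\nu/G$ is a rational double point; thus the points with nontrivial stabilizer are exactly the $q_{ij}$, the group $G$ acts freely on $X_\nu \setminus \{q_{ij}\}$, and $(X_\nu \setminus \{q_{ij}\})/G \cong Y_\nu'$. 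Since $X_\nu$ is a K3 surface it is simply connected, and deleting the finitely many $q_{ij}$ (real codimension $4$) preserves this, so $X_\nu \setminus \{q_{ij}\}$ is the universal cover of $Y_\nu'$ and $\pi_1(Y_\nu') \cong G$.

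Next I would propagate this along $\mathcal{N}_T$. By Theorem~\ref{T:components} and its corollary $\mathcal{N}_T$ is path-connected, so choose a path from $\nu$ to $q$. Pulling the universal marked family back over the path yields a smooth family of K3 surfaces, topologically trivial by Ehresmann's theorem, and since a smooth rational curve on a K3 surface is rigid in its cohomology class the marked curves $\alpha_t^{-1}(c_i)$ — together with tubular neighborhoods of the configuration $\Delta_t$, whose combinatorics are fixed by $T$ — trace out a locally trivial subfamily; hence the pairs $(Y_t, \Delta_t)$ form a topologically locally trivial family and $\pi_1(Y_t - \Delta_t)$ is constant along the path. Thus $\pi_1(Y_q - \Delta_q) \cong G$. \emph{Verifying this deformation invariance is the main obstacle}: one must check that the configurations $\Delta_t$ and their tubular neighborhoods deform trivially so that the complements remain homeomorphic, which is exactly where one uses that a condition $T$ fixes the lattice $M$ and the classes $c_i$ with their intersection matrix.

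Finally I would apply Theorem~\ref{T:shimadaZhangCampana} to $Y_q$ and $\Delta_q$. Because $\pi_1(Y_q - \Delta_q) \cong G$ is finite and nontrivial, alternatives (1) and (2) are impossible, so (3) holds: the universal cover of $Y_q - \Delta_q$ is a K3 surface $X_q$ with a finite set $A$ removed, on which the deck group — identified with $G$ — acts symplectically. The holomorphic two-form of $X_q$ restricts to a nowhere-vanishing $G$-invariant two-form on $X_q \setminus A$, and as it is unique up to scalar the $G$-action on $X_q \setminus A$ extends (Hartogs) to a symplectic action on $X_q$. Since $G$ acts freely on $X_q \setminus A$, the quotient $X_q/G$ is smooth away from the images of $A$ and has rational double points there, so its minimal resolution $\tilde Y_q$ is a K3 surface, and $(X_q \setminus A)/G \cong Y_q - \Delta_q$ is a dense open subset of both $\tilde Y_q$ and $Y_q$. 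Extending the resulting identity map to a bimeromorphic map $Y_q \dashrightarrow \tilde Y_q$ and using that a bimeromorphic map of K3 surfaces is an isomorphism, we get $Y_q \cong \tilde Y_q$ compatibly with these open subsets; composing with $\tilde Y_q \to X_q/G$ exhibits $Y_q$ as a resolution of $X_q/G$, as required.
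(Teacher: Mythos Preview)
Your proof is correct and follows essentially the same route as the paper: establish $\pi_1(Y_\nu - \Delta_\nu) \cong G$, propagate this along a path in $\mathcal{N}_T$ using local topological triviality of the pair $(Y_t,\Delta_t)$, and then invoke Theorem~\ref{T:shimadaZhangCampana}. You supply more detail than the paper does at both ends---the computation of $\pi_1(Y_\nu')$ via the free action on $X_\nu\setminus\{q_{ij}\}$, and the identification of $Y_q$ with the minimal resolution via Hartogs extension and the fact that bimeromorphic K3 surfaces are isomorphic---while the paper simply asserts these; conversely the paper phrases the deformation step as the existence of a diffeomorphism $Y_n \to Y_{n'}$ carrying $\alpha_n^{-1}(\{c_j\})$ to $\alpha_{n'}^{-1}(\{c_j\})$ on small neighborhoods, which is the same content as your Ehresmann-plus-rigidity argument.
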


\begin{proof}
For any $n \in \mathcal{N}_T$, we may choose a neighborhood $U_n$ of $n$ such that for all $n'$ in $U_n$, there exists a diffeomorphism $\alpha: Y_n \to Y_{n'}$ such that $\alpha^*(\alpha_{n'}^{-1}(M)) = \alpha_{n}^{-1}(M)$ and (since rational curves in K3 surfaces are uniquely determined by their homology classes) $\alpha^*(\alpha_{n'}^{-1}(\{c_j\})) = \alpha_{n}^{-1}(\{c_j\})$.  Thus $Y_n - \alpha_m^{-1}(\{c_j\})$ and $Y_{n'} - \alpha_{n'}^{-1}(\{c_j\})$ are isomorphic, and $\pi_1(Y_n - \alpha_n^{-1}(\{c_j\})) = \pi_1(Y_{n'} - \alpha_{n'}^{-1}(\{c_j\}))$.

By Theorem~\ref{T:components}, there exists a path in $\mathcal{N}_T$ from $q$ to $\nu$.  Covering this path by a finite number of the neighborhoods $U_n$, we see that $\pi_1(Y_q - \alpha_q^{-1}(\{c_j\}))$ is isomorphic to $\pi_1(Y_\mu - \alpha_\mu^{-1}(\{c_j\}))$, so $\pi_1(Y_q - \alpha_q^{-1}(\{c_j\}))= G$.  By Theorem~\ref{T:shimadaZhangCampana}, the covering space of $Y_q - \alpha_q^{-1}(\{c_j\})$ is isomorphic to a K3 surface $X_q$ with a finite number of points removed, and $G$ acts symplectically on $X_q$.  Thus, $Y_q$ is the resolution of $X_q/G$, as desired.
\end{proof}

Starting with $Y_q$, we obtained a pair $(X_q, i_q : G \hookrightarrow \mathrm{Aut}\,X)$.  



\begin{definition}
We say that two points $n,n' \in \mathcal{N}_T$ \emph{determine the same action of} $G$ on the two-dimensional integral cohomology of K3 surfaces if there exist corresponding pairs $(X_n, i_n : G \hookrightarrow \mathrm{Aut}\,X)$, $(X_{n'}, i_{n'} : G \hookrightarrow \mathrm{Aut}\,X)$ and an isomorphism $\phi: H^2(X_n, \mathbf{Z}) \to H^2(X_{n'},\mathbf{Z})$ which preserves the cup product and satisfies the relation
\[i_{n'}(g)^* = \phi \cdot i_n(g) \cdot \phi^{-1}\]
for any $g \in G$.
\end{definition}

The condition that points determine the same action of $G$ defines an equivalence relation on $\mathcal{N}_T$.




\begin{theorem}\label{T:open}
Let $n_0 \in \mathcal{N}_T$, and suppose $n_0$ corresponds to the pair $(X_n, i_n : G \hookrightarrow \mathrm{Aut}\,G)$.  The set of points in $\mathcal{N}_T$ which determine the same action of $G$ as $n_0$ is open.
\end{theorem}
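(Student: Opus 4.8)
The plan is to show that the equivalence class of $n_0$ is a union of connected components of a suitable open subspace, by exhibiting it as the image of an open piece of the moduli space of marked K3 surfaces with symplectic automorphism group $G$ and action $\phi$. First I would recall that, by Theorem~\ref{T:allQuotients}, every point $n \in \mathcal{N}_T$ corresponds to a pair $(X_n, i_n\colon G \hookrightarrow \mathrm{Aut}\,X_n)$, obtained from the universal covering of $Y_n$ minus its exceptional curves. Fixing a marking of $X_{n_0}$ compatible with the $G$-action produces an embedding $\phi\colon \dot{G} \hookrightarrow O(L)$ as in the discussion preceding the definition of $\mathcal{M}_{G,\phi}$. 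The key point is that two points $n, n' \in \mathcal{N}_T$ determine the same action of $G$ precisely when the corresponding marked K3 surfaces $X_n$, $X_{n'}$ can be given markings realizing the \emph{same} embedding $\phi$ (up to the action of the centralizer of $\phi(\dot G)$ in $O(L)$ and of $\mathrm{Aut}(G)$); this is a purely lattice-theoretic reformulation of the defining relation $i_{n'}(g)^* = \phi \cdot i_n(g) \cdot \phi^{-1}$.

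Next I would pass to the upstairs moduli spaces. The assignment $Y_n \mapsto X_n$ is, locally on $\mathcal{N}_T$, a continuously-varying construction: as in the proof of Theorem~\ref{T:allQuotients}, a neighborhood $U_n$ of any point admits diffeomorphisms matching the exceptional configurations, hence matching the complements $Y_n - \alpha_n^{-1}(\{c_j\})$ and their universal covers. It follows that the induced action of $G$ on $H^2(X_n,\mathbf{Z})$ — transported to $L$ via a marking — is locally constant in $n$. Therefore the set of $n \in \mathcal{N}_T$ for which this action agrees with that of $n_0$ (i.e. equals $\phi$ up to the finite ambiguities above) is open: around each such $n$ one can choose markings varying continuously over $U_n$, and the relation $\beta_{n'} i_{n'}(g)^* \beta_{n'}^{-1} = \phi(g)$ persists on $U_n$ because both sides are locally constant integral-cohomology data and they agree at $n$. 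More precisely, I would invoke \cite[Theorem 4.10]{Nikulin}: the subspace $\mathcal{M}_{G,\phi} \subset \mathcal{M}_{S_G}$ is open, and $\mathcal{M}_T \hookrightarrow \mathcal{M}_{S_G}$ (since $S_G \subseteq M$, the condition $T$ refines the condition $R$); pulling back openness of $\mathcal{M}_{G,\phi}$ along this inclusion and pushing forward under the forgetful map $\mathcal{M}_T \to \mathcal{N}_T$ — which is open, being a quotient by the group of marking changes — gives the desired open subset of $\mathcal{N}_T$.

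The main obstacle is bookkeeping the ambiguities: a single point $n$ may carry several non-isomorphic pairs $(X_n, i_n)$, and "the same action of $G$" is defined only up to an unmarked isomorphism $\phi$ of $H^2$, so I must ensure that the lattice-theoretic translation is insensitive to (i) the choice of marking of $X_n$, (ii) the choice of covering-space identification $\pi_1(Y_n - \Delta) \cong G$, which introduces an $\mathrm{Aut}(G)$-ambiguity, and (iii) the residual $O(L)$-conjugation. The cleanest way around this is to observe that each of these ambiguities ranges over a \emph{discrete} (indeed finite) set — $\mathrm{Aut}(G)$ is finite, and the set of embeddings $\phi'$ conjugate to $\phi$ in $O(L)$ that can arise is finite since $O(L)$ acts on the finitely many primitive embeddings $S_G \hookrightarrow L$ — so that "determining the same action as $n_0$" is a condition that, on each $U_n$, either holds identically or fails identically. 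Hence the equivalence class of $n_0$ is open, which is what we wanted to prove. $\qed$
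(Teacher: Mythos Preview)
Your proposal contains a genuine error in the ``more precisely'' step. You write that $\mathcal{M}_T \hookrightarrow \mathcal{M}_{S_G}$ because ``$S_G \subseteq M$, the condition $T$ refines the condition $R$'', but this conflates two different moduli problems: $\mathcal{M}_T$ parametrizes marked pairs $(Y,\kappa)$ where $Y$ is a \emph{resolution of a quotient}, while $\mathcal{M}_{S_G}$ and $\mathcal{M}_{G,\phi}$ parametrize marked pairs $(X,\kappa)$ where $X$ \emph{carries the $G$-action}. The lattices $S_G \subset H^2(X,\mathbf{Z})$ and $M \subset H^2(Y,\mathbf{Z})$ live in the cohomology of different K3 surfaces; even after identifying both second cohomology groups with $L$ via markings the two sublattices cannot coincide, since $S_G$ contains no $(-2)$-classes by \cite[Lemma~4.2]{Nikulin} while $M$ is spanned by them. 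So there is no inclusion $\mathcal{M}_T \hookrightarrow \mathcal{M}_{S_G}$ along which to pull back the openness of $\mathcal{M}_{G,\phi}$, and your invocation of \cite[Theorem~4.10]{Nikulin} does not apply as stated.

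The paper's proof runs in the opposite direction from the one you attempt. Starting from a neighborhood $V$ of $\nu_0$ in $\mathcal{N}_{G,\phi}$, one takes the quotient of the universal family $\mathcal{X}_{G,\phi}^V$ by $G$ and resolves, obtaining a family $\mathcal{Y}^V_{G,\phi} \to V$ of marked K3 surfaces with condition $T$. The bridge between the two moduli spaces is the sheaf map $\theta$ (the family version of the map in Theorem~\ref{T:exactSequence}): it restricts to an injection $M^\perp \hookrightarrow L^{\phi(G)}$, and the key new input in the non-abelian case is Proposition~\ref{P:NikWhit}, which shows these lattices have the same rank, so $\theta \otimes \mathbf{C}$ is an isomorphism. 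This is precisely what lets Nikulin's argument from \cite[\S 8.5]{Nikulin} go through and exhibit the image of $V$ as an open neighborhood of $n_0$ in $\mathcal{N}_T$. Your earlier topological sketch---lift the local diffeomorphisms $Y_n' \cong Y_{n'}'$ to $G$-equivariant diffeomorphisms of universal covers, hence identify the $G$-modules $H^2(X_n,\mathbf{Z})$---is a plausible alternative route, but it is not made rigorous by the moduli-space paragraph that follows it; you would still need to explain why the smooth lift $X_n' \to X_{n'}'$ extends across the punctures to the compactified K3 surfaces, or argue directly that the $G$-module $H^2(X_n,\mathbf{Z}) \cong H^2(X_n',\mathbf{Z})$ depends only on the homotopy type of $Y_n'$.
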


\begin{proof}
We construct an open neighborhood of $n_0$ in which the action coincides with the action determined by $n_0$ and a corresponding neighborhood in $\mathcal{N}_{G,\phi}$.  Fix a marking $\beta_{n_0}: H^2(X,\mathbf{Z}) \to L$.  The triple $(X_{n_0}, i_{n_0}, \beta_{n_0})$ defines a point $\nu_0$ in the moduli space $\mathcal{N}_{G,\phi}$.  The usual map $u: \mathcal{X} \to \mathcal{N}$ restricts to a map $u_{G,\phi}: \mathcal{X}_{G,\phi} \to \mathcal{N}_{G,\phi}$.  Following \cite[\S 8.5]{Nikulin}, we obtain a neighborhood $V$ of $\nu_0$ in $\mathcal{N}_{G,\phi}$, a corresponding neighborhood $\mathcal{X}_{G,\phi}^V$ in $\mathcal{X}_{G,\phi}$, and a resolution $\mathcal{Y}_{G,\phi}^V$ of $\mathcal{X}_{G,\phi}^V/G$ such that the following diagram commutes:

\[\begindc{0}[5]
\obj(0,10)[]{$\mathcal{X}_{G,\phi}^V$}
\obj(20,10)[]{$\mathcal{X}_{G,\phi}^V/G$}
\obj(40,10)[]{$\mathcal{Y}_{G,\phi}^V$}
\obj(20,0)[]{$V$}
\mor(1,10)(18,10){$\pi$}
\mor(40,10)(22,10){$\sigma$}[\atright,\solidarrow]
\mor(0,10)(20,0){$u_{G,\phi}$}[\atright,\solidarrow]
\mor(20,10)(20,0){}
\mor(40,10)(20,0){$v$}
\enddc\]

Each curve $E_j$ in $Y_{n_0}$ extends uniquely to an effective divisor $\mathbf{E}_j$ on $\mathcal{Y}_{G,\phi}^V$.  For each $\nu \in V$, $\mathbf{E}_j \cdot Y_\nu = E_j^\nu$ is a nonsingular rational curve on $Y_n$, where $\{E_j^\nu\}$ is the set of components of the curves obtained from the resolution of singularities of $X_\nu / G$.  We set $\mathcal{X}_{G,\phi}' = \mathcal{X}_{G,\phi} - \{\text{fixed~points~of}~G\}$ and $\mathcal{Y}_{G,\phi}' = \mathcal{Y}_{G,\phi} - \cup \mathbf{E}_j$, obtaining a new commutative diagram:

\[\begindc{0}[5]
\obj(0,10)[]{$\mathcal{X}_{G,\phi}^V$}
\obj(12,10)[]{$(\mathcal{X}_{G,\phi}^V)'$}
\obj(28,10)[]{$(\mathcal{Y}_{G,\phi}^V)'$}
\obj(40,10)[]{$\mathcal{Y}_{G,\phi}^V$}
\obj(20,0)[]{$V$}
\mor(1,10)(10,10){}
\mor(40,10)(30,10){}
\mor(0,10)(20,0){$u_{G,\phi}$}[\atright,\solidarrow]
\mor(15,10)(25,10){$\pi'$}
\mor(10,10)(20,0){$u_{G,\phi}'$}
\mor(30,10)(20,0){$v'$}[\atright,\solidarrow]
\mor(40,10)(20,0){$v$}
\enddc\]

These maps induce corresponding maps on $G$-sheaves:

\[{R^2u_{G,\phi}}_*\mathbf{Z} \overset{i^*}{\longrightarrow} {R^2u_{G,\phi}}_*'\mathbf{Z} \overset{\pi'^*}{\longleftarrow} {R^2v_{G,\phi}}_*'\mathbf{Z} \overset{j^*}{\longleftarrow} {R^2v_{G,\phi}}_*\mathbf{Z} \]

\cite{Nikulin} showed that there exists a map 
\[\theta = (i^*)^{-1} \circ \pi'^* \circ \bar{j^*}: {R^2v_{G,\phi}}_*\mathbf{Z}/\oplus \mathbf{ZE}_j \to ({R^2v_{G,\phi}}_*\mathbf{Z})^G\] 
which satisfies $\theta(x) \cdot \theta(y) = |G| (x\cdot y)$ for $x,y \in (\oplus\mathbf{ZE}_j)^\perp$ and fits into an exact sequence
\[
0 \to \mathrm{ker}~\theta  \longrightarrow R^2v_*\mathbf{Z}/\oplus \mathbf{Z}E_j \overset{\theta}{\longrightarrow} (R^2u_{G,\phi_*}\mathbf{Z})^G.
\]
\cite{Nikulin} also showed that $\mathrm{ker}~\theta$ is the torsion subsheaf of $R^2v_*\mathbf{Z}/\oplus \mathbf{Z}E_j$.

Over $\mu_0$, we may use the markings $\alpha_{n_0}$ and $\beta_{n_0}$ to obtain the exact sequence
\[\begin{CD}
0 @>>> M/\oplus\mathbf{Z}c_j @>>> L/\oplus\mathbf{Z}c_j @>\beta_{n_0} \circ \theta \circ \alpha_{n_0}^{-1}>> L^{\phi(G)}.
\end{CD}\]

Note that $\theta$ restricts to an injective map $\theta: M^\perp \hookrightarrow L^{\phi(G)}$.  Proposition~\ref{P:NikWhit} implies that $M^\perp$ and $L^{\phi(G)}$ have the same rank, so we may extend $\theta$ to an isomorphism from $M^\perp \otimes \mathbf{C}$ to $L^{\phi(G)} \otimes \mathbf{C}$.  The theorem now follows from the argument in the abelian case (see \cite[\S 8.5]{Nikulin}).
\end{proof}

\begin{remark}
\cite{Nikulin} claimed that $\theta$ is a surjective map when $G$ is abelian.  As Garbagnati and Sarti observed, this is not the case (see \cite[Proposition 2.4]{Garbagnati} and \cite{GarSarti}).  In general, the discrepancy is given by Theorem~\ref{T:exactSequence}. 
\end{remark}

\begin{corollary}\label{C:sameAction}
All points of $\mathcal{N}_T$ determine the same action.
\end{corollary}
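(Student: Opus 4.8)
The plan is to deduce the corollary from the path-connectedness of $\mathcal{N}_T$ (Theorem~\ref{T:components} and its corollary) together with the openness statement of Theorem~\ref{T:open}. First I would record that the relation ``$n$ and $n'$ determine the same action of $G$'' is, as observed immediately after its definition, an equivalence relation on $\mathcal{N}_T$, and that every point of $\mathcal{N}_T$ actually does determine an action: this is precisely the content of Theorem~\ref{T:allQuotients}, which produces for each $q \in \mathcal{N}_T$ a K3 surface $X_q$ carrying a symplectic $G$-action such that $Y_q$ resolves $X_q/G$. Consequently $\mathcal{N}_T$ is the disjoint union of the equivalence classes of this relation, and it is nonempty by hypothesis.

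Next I would show that each equivalence class is both open and closed in $\mathcal{N}_T$. Openness of a fixed class $[n_0]$ is exactly Theorem~\ref{T:open}. For closedness, note that $\mathcal{N}_T \setminus [n_0]$ is the union of the remaining equivalence classes, each of which is open by applying Theorem~\ref{T:open} at one of its points; hence the complement is open, so $[n_0]$ is closed.

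Finally, since $\mathcal{N}_T$ is connected, the only nonempty subset that is simultaneously open and closed is $\mathcal{N}_T$ itself. Therefore $[n_0] = \mathcal{N}_T$ for any choice of $n_0$, which is precisely the statement that all points of $\mathcal{N}_T$ determine the same action of $G$.

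I do not expect a serious obstacle here: the real work has been packaged into Theorems~\ref{T:allQuotients}, \ref{T:components}, and \ref{T:open}, and the corollary is a standard ``connected space partitioned into open sets has one block'' argument. The only points requiring a moment's care are that the relation is genuinely transitive and symmetric, so that the classes really do partition $\mathcal{N}_T$ (which is why I would begin by pinning down the equivalence-relation structure), and that the complement of an open class is open, which needs Theorem~\ref{T:open} applied at \emph{every} point rather than just the chosen one.
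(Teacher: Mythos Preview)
Your argument is correct and is precisely the intended one: the paper states the corollary without proof because it follows immediately from the connectedness of $\mathcal{N}_T$ (the corollary to Theorem~\ref{T:components}) and the openness of each equivalence class (Theorem~\ref{T:open}) via the standard ``connected space partitioned into open pieces'' argument you outline. There is nothing to add.
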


Together, Theorem~\ref{T:allQuotients} and Corollary~\ref{C:sameAction} show that we may classify symplectic actions on K3 surfaces by classifying the conditions $T$ which are obtained from symplectic actions.  \cite[Table 2]{Xiao} lists the ADE configurations corresponding to finite groups which can act symplectically; we shall refer to these ADE configurations as \emph{symplectic ADE configurations}.  In most cases, a group $G$ corresponds to a single symplectic ADE configuration; the exceptions are $Q_8$, the group of unit quaternions, and $T_{24}$, the binary tetrahedral group of order $24$, each of which corresponds to two different symplectic ADE configurations.  Nikulin showed by direct computation that when $G$ is abelian, the primitive lattice $M$ generated by the singular curves has a unique embedding in the K3 lattice, so $T$ is uniquely determined by $G$ (see \cite[Theorem 7.2]{Nikulin}).  The condition $T$ (and thus the action of $G$) is not uniquely determined by $G$ for every non-abelian group $G$.  For instance, Hashimoto showed in \cite[Proposition~2.12]{Hashimoto} that there are two distinct symplectic actions of the symmetric group $G=\mathcal{S}_5$.

\begin{question}\label{Q:embedding}
Does every embedding of a symplectic ADE configuration of rational curves in the K3 lattice yield a symplectic group action?  
\end{question}

Theorem~\ref{T:shimadaZhangCampana} tells us that we may approach Question~\ref{Q:embedding} by analyzing the possible fundamental groups of the complement of a given configuration.  Let $T^2$ be a two-dimensional complex torus, and let $\Gamma$ be a finite group of automorphisms of $T^2$.  Fujiki classified the possible finite groups $\Gamma$ in \cite{Fujiki}, and Bertin, \"{O}nsiper, and Sert{\"{o}}z classified the resulting singularities of $T^2/\Gamma$ (see \cite{Bertin} and \cite[Proposition 3]{OnSert}):

~\\
\begin{tabular}{l l}
Group & Singularities of $T^2/\Gamma$ \\ \hline
$C_2$ & $16 A_1$ \\
$C_3$ & $9 A_2$  \\
$C_4$ & $4 A_3 + 6 A_1$ \\
$C_6$ & $A_5 + 4 A_2 + 5 A_1$ \\
$Q_8$ & $4D_4 + 3A_1$ \\
$Q_{12}$ & $D_5 + 3A_3 + 2A_2 + A_1$ \\
$T_{24}$ & $A_5 + 2 A_3 + 4 A_2$ \\
$T_{24}$ & $E_6 + D_4 + 4A_2 + A_1$
\end{tabular}
~\\

\noindent (Here $C_k$ is the cyclic group of order $k$, $Q_8$ and $Q_{12}$ are binary dihedral groups, and $T_{24}$ is the binary tetrahedral group.)

The list of K3 singularities obtained from group actions in \cite[Table 2]{Xiao} is disjoint from the list above. 

We next consider whether there exists an ADE configuration $\Delta$ which can be obtained in two ways: from a singular K3 surface whose smooth part has trivial fundamental group, and as the ADE singularity of another K3 surface whose smooth part has non-trivial fundamental group.  Most of the cases can be eliminated using the following lemma, as stated in \cite[Lemma 4.6]{ShiZhang}:

\begin{lemma}\cite[Lemma 2]{Xiao}\label{L:fundGroupTrivial}
Let $\tilde{\Delta}$ be an ADE configuration of rational curves on a K3 surface, let $\mathbf{Z}[\Delta]$ be the sublattice of the K3 lattice $L$ generated by the curves in $\tilde{\Delta}$, and let $M_\Delta$ be the smallest primitive sublattice of $L$ containing $\mathbf{Z}[\Delta]$.  Then the dual of the abelianisation of $\pi_1(X - \tilde{\Delta})$ is canonically isomorphic to $M_\Delta / \mathbf{Z}[\Delta]$.  In particular, if $\pi_1(X - \tilde{\Delta})$ is trivial, then $\mathbf{Z}[\Delta]$ embeds in $L$ as a primitive sublattice.
\end{lemma}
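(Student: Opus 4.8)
\emph{Proof proposal.} The plan is to compute $H_1(X-\tilde\Delta)$, which by the Hurewicz theorem is the abelianisation of $\pi_1(X-\tilde\Delta)$ since $X-\tilde\Delta$ is connected, and then to dualise. Write $X'=X-\tilde\Delta$ and let $c$ be the number of rational curves $C_i$ making up $\tilde\Delta$. I would first use that every ADE Dynkin diagram is a tree, so the dual graph of $\tilde\Delta$ is a tree and $\tilde\Delta$ is simply connected; hence a closed tubular neighbourhood $\overline N$ of $\tilde\Delta$ in $X$ is homotopy equivalent to a bouquet of $c$ two-spheres, $H_2(N)$ is free on the classes of the $C_i$ (and maps isomorphically onto $\mathbf{Z}[\Delta]\subset L$, the intersection form being nondegenerate there), and $H_1(N)=H_3(N)=0$. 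Poincar\'e--Lefschetz duality for the compact $4$-manifold with boundary $\overline N$ then gives $H_k(N,\partial N)\cong H^{4-k}(N)$; in particular $H_2(N,\partial N)\cong\mathrm{Hom}(H_2(N),\mathbf{Z})=\mathbf{Z}[\Delta]^{*}$ (no $\mathrm{Ext}$ term, as $H_1(N)=0$) and $H_1(N,\partial N)=H_3(N,\partial N)=0$.

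Next I would feed this into the long exact homology sequence of the pair $(X,X')$, using excision to identify $H_*(X,X')$ with $H_*(N,\partial N)$. Since $X$ is a K3 surface, $H_1(X)=0$; together with $H_3(X,X')=0$ the sequence collapses to
\[
0 \longrightarrow H_2(X') \longrightarrow H_2(X) \overset{\rho}{\longrightarrow} H_2(X,X') \longrightarrow H_1(X') \longrightarrow 0,
\]
so $H_1(X')\cong\operatorname{coker}\rho$. The decisive step, and the one I expect to be the main obstacle, is to identify $\rho$ explicitly. Under $H_2(X,X')\cong H_2(N,\partial N)\cong H^2(N)\cong\mathbf{Z}[\Delta]^{*}$, naturality of Poincar\'e--Lefschetz duality with respect to restriction to the open submanifold $N\subset X$ shows that $\rho$ sends $\alpha\in H_2(X)$ to the functional $C_i\mapsto\langle\alpha,C_i\rangle$. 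Equivalently, using unimodularity of $L=H_2(X)$ to identify $L$ with $L^{*}$, the map $\rho$ becomes the restriction homomorphism $\iota^{*}\colon L^{*}\to\mathbf{Z}[\Delta]^{*}$ dual to $\iota\colon\mathbf{Z}[\Delta]\hookrightarrow L$. Verifying this compatibility of the Thom/Gysin picture of $H_2(X,X')$ with the intersection form is the one genuinely geometric point; everything else is homological and lattice-theoretic bookkeeping.

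It then remains to compute $\operatorname{coker}\iota^{*}$. Applying $\mathrm{Hom}(-,\mathbf{Z})$ to $0\to\mathbf{Z}[\Delta]\overset{\iota}{\to}L\to L/\mathbf{Z}[\Delta]\to 0$ gives the exact sequence
\[
0 \longrightarrow (L/\mathbf{Z}[\Delta])^{*} \longrightarrow L^{*} \overset{\iota^{*}}{\longrightarrow} \mathbf{Z}[\Delta]^{*} \longrightarrow \mathrm{Ext}^1(L/\mathbf{Z}[\Delta],\mathbf{Z}) \longrightarrow 0,
\]
so $H_1(X')\cong\operatorname{coker}\iota^{*}\cong\mathrm{Ext}^1(L/\mathbf{Z}[\Delta],\mathbf{Z})\cong\mathrm{Hom}((L/\mathbf{Z}[\Delta])_{\mathrm{tors}},\mathbf{Q}/\mathbf{Z})$. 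By the definition of $M_\Delta$ as the primitive closure (saturation) of $\mathbf{Z}[\Delta]$ in $L$, we have $(L/\mathbf{Z}[\Delta])_{\mathrm{tors}}=M_\Delta/\mathbf{Z}[\Delta]$, so $H_1(X')$ is canonically the Pontryagin dual of $M_\Delta/\mathbf{Z}[\Delta]$. Dualising once more, and using that the finite group $M_\Delta/\mathbf{Z}[\Delta]$ is reflexive, identifies the dual of the abelianisation of $\pi_1(X-\tilde\Delta)$ canonically with $M_\Delta/\mathbf{Z}[\Delta]$, as asserted. The final clause is then immediate: if $\pi_1(X-\tilde\Delta)$ is trivial, its abelianisation vanishes, so $M_\Delta/\mathbf{Z}[\Delta]=0$, i.e.\ $\mathbf{Z}[\Delta]=M_\Delta$ is a primitive sublattice of $L$.
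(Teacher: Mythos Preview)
The paper does not supply its own proof of this lemma; it merely states the result with citations to \cite[Lemma 2]{Xiao} and \cite[Lemma 4.6]{ShiZhang}, so there is nothing in the paper to compare against. Your argument is a correct and standard derivation of the statement. The long exact sequence of the pair $(X,X')$, together with excision and Poincar\'e--Lefschetz duality for the tubular neighbourhood, is exactly the right machinery, and your identification of $\rho$ with the restriction $L^{*}\to\mathbf{Z}[\Delta]^{*}$ via the intersection form (the step you flag as the ``one genuinely geometric point'') is the crux and is handled correctly. The computation of $\operatorname{coker}\iota^{*}$ via $\mathrm{Ext}^1$ and the identification of the torsion of $L/\mathbf{Z}[\Delta]$ with $M_\Delta/\mathbf{Z}[\Delta]$ are routine and correct.

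One small inaccuracy: you write that the dual graph of $\tilde\Delta$ is a tree, but an ADE configuration in the sense used here may have several connected components, so the dual graph is a forest and the tubular neighbourhood $N$ need not be connected. This does not affect the argument---each component is still homotopy equivalent to a wedge of $2$-spheres, and the vanishing $H_1(N)=H_3(N)=H^1(N)=0$ (hence $H_3(N,\partial N)=0$) goes through component by component---but it is worth stating precisely.
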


\cite[Table 2]{Xiao} lists $M_\Delta / \mathbf{Z}[\Delta]$ for each ADE configuration which can occur as the exceptional divisor of a resolution of the quotient of a K3 surface by a group of symplectic automorphisms.  Using Lemma~\ref{L:fundGroupTrivial}, we conclude that none of the configurations in \cite[Table 2]{Xiao} can yield a trivial fundamental group, save possibly the following list of symplectic ADE configurations obtained from perfect groups:

~\\
\begin{tabular}{l l}
Group & Symplectic ADE Configuration\\ \hline
$\mathcal{A}_5$ & $2A_4 + 3A_2 + 4A_1$\\
$L_2(7)$ & $A_6 + 2A_3 + 3A_2 + A_1$\\
$\mathcal{A}_6$ & $2A_4 + 2 A_3 + 2 A_2 + A_1$\\
$M_{20}$ & $D_4 + 2A_4 + 3 A_2 + A_1$
\end{tabular}
~\\

\noindent(Here $\mathcal{A}_5$ and $\mathcal{A}_6$ are alternating groups, $L_2(7)$ is the Chevalley group $PSL(2,\mathbf{F}_7)$, and $M_{20}$ is a subgroup of the Mathieu group $M_{24}$ which is isomorphic to the semidirect product $(\mathbf{Z}/2\mathbf{Z})^4 \ltimes \mathcal{A}_5$.)

Symplectic actions of these groups have been extensively studied using Niemeier lattices.  Mukai studied the lattice invariants of $S_G$ when $G = M_{20}$ in an appendix to \cite{Kondo}; Oguiso and Zhang investigated finite non-symplectic extensions of an $L_2(7)$ action in \cite{OZ}; Keum, Oguiso, and Zhang studied extensions of $\mathcal{A}_6$ actions in \cite{KOZ} and \cite{KOZ2}; and Hashimoto considered actions induced by $\mathcal{A}_5 \hookrightarrow \mathcal{S}_5$ in \cite{Hashimoto}.

When $G = \mathcal{A}_5$, the lattice $M = M_\Delta$ has rank $18$ and discriminant group $M^*/M \cong (\mathbf{Z}/5\mathbf{Z})^2 \oplus (\mathbf{Z}/3\mathbf{Z})^3 \oplus (\mathbf{Z}/2\mathbf{Z})^4$.  Therefore, the primitive embedding of $M$ in the K3 lattice $L$ is unique up to isometries of $L$ by the results of \cite{NikForms}, and $\mathcal{A}_5$ corresponds to a single condition $T$ and moduli space $\mathcal{M}_T$.

For each of the groups $\mathcal{A}_6$, $L_2(7)$, and $M_{20}$, the lattice $M$ has rank $19$; thus, its orthogonal complement $M^\perp$ in $L$ will be a positive definite lattice of rank $3$.  Isomorphism classes of positive definite lattices are not always uniquely determined by their invariants.  Using the computer algebra system Magma, one may check that when $M_{20}$ acts symplectically, the lattice $M^\perp$ is uniquely determined up to isomorphism (see \cite{Magma}).  However, in the cases of $\mathcal{A}_6$ and $L_2(7)$ a similar analysis in Magma shows that there are two distinct candidates for each $M^\perp$, and therefore two possible embeddings of each lattice $M$ in $L$ (up to overall isometry).  Determining whether these embeddings can be constructed using symplectic actions of $\mathcal{A}_6$ and $L_2(7)$ is an interesting question for further research.


\begin{thebibliography}{BHPV04}

\bibitem[BHPV04]{BHPV}
Barth, W.P., Hulek, K., Peters, C.A.M., and Van de Ven, A. 
\emph{Compact Complex Surfaces}. Berlin, Springer: 2004.

\bibitem[BC94]{BatCox}
Batyrev, V. and Cox, D. On the Hodge structure of projective hypersurfaces in toric
varieties. \emph{Duke Mathematical Journal} 75, 1994.

\bibitem[B88]{Bertin}
Bertin, J. R\'{e}seaux de {Kummer} et surfaces {K3}. \emph{Inventiones Mathematicae} 93, 1988.

\bibitem[BCP97]{Magma}
Bosma, W., Cannon, J. and Playoust, C. The Magma algebra system. I. The user language. \emph{Journal of Symbolic Computation}, 24(3-4):235-265, 1997.

\bibitem[C04]{Campana}
Campana, F. Orbifoldes \`{a} Premi\`{e}re Classe de Chern Nulle. arXiv:math.AG/0402243 v2, 2004.

\bibitem[EC56]{EilCart}
Cartan, H. and Eilenberg, S. \emph{Homological Algebra}. Princeton: Princeton University Press, 1956, 1999.

\bibitem[C96]{Cox}
Cox, D. Toric residues. \emph{Arkiv f\"{o}r matematik} 34, 1996.

\bibitem[CK99]{CoxKatz}
Cox, D. and Katz, S. \emph{Mirror Symmetry and Algebraic Geometry}. Providence: American Mathematical Society, 1999.

\bibitem[DK08]{DorKerr}

Doran, C. and Kerr, M. Algebraic K-theory of toric hypersurfaces.	arXiv:0809.4669v1, 2008.

\bibitem[F88]{Fujiki}
Fujiki, A. Finite automorphism groups of complex tori of dimension two. \emph{Publications of the Research Institute for Mathematical Sciences} 24 no. 1, 1988.

\bibitem[G08a]{Garbagnati}
Garbagnati, A. Symplectic Automorphisms on Kummer Surfaces.	arXiv:0802.0369v1, 2008.

\bibitem[G08b]{Gar08b}
Garbagnati, A.  The Dihedral Group $\mathcal{D}_5$ as Group of Symplectic Automorphisms on K3 Surfaces.		arXiv:0812.4518v1, 2008.


\bibitem[G09]{Gar09}
Garbagnati, A.  Elliptic {K3} surfaces with abelian and dihedral groups of symplectic automorphisms.  arXiv.org:0904.1519, 2009.


\bibitem[GS08]{GarSarti}
Garbagnati, A. and Sarti, A. Elliptic fibrations and symplectic automorphisms on K3 surfaces. \emph{Journal of Algebra} 318:1, 2007.

\bibitem[H09]{Hashimoto}
Hashimoto, K.  Period map of a certain {K3} family with an {$\mathcal{S}_5$}-action.  arXiv.org:0904.0072, 2009.


\bibitem[HLOY04]{HLOY}
Hosono, S., Lian, B.H., Oguiso, K., and Yau, S.-T. Autoequivalences of derived category of a $K3$ surface and monodromy transformations. \emph{Journal of Algebraic Geometry} 13, no. 3, 2004.


\bibitem[KOZ05]{KOZ}
Keum, J., Oguiso, K., and Zhang, D.-Q.  The alternating group of degree 6 in the geometry of the
              {L}eech lattice and {$K3$} surfaces.  \emph{Proc. London Math. Soc. (3)}, 90, no. 2, 2005.
              
\bibitem[KOZ07]{KOZ2}
 Keum, J., Oguiso, K., and Zhang, D.-Q.  Extensions of the alternating group of degree 6 in the
              geometry of {$K3$} surfaces.  \emph{European Journal of Combinatorics} {28}, no. 2, 2007.


\bibitem[K98]{Kondo}
{Kond{\=o}, S.}  Niemeier lattices, {M}athieu groups, and finite groups of
              symplectic automorphisms of {$K3$} surfaces.  With an appendix by Shigeru Mukai.  \emph{Duke Mathematical Journal} {92}, no. 3, 1998.


\bibitem[M84]{Morrison}
Morrison, D.R. On K3 surfaces with large Picard number. \emph{Inventiones Mathematicae} 75, 1984.

\bibitem[M88]{Mukai}
Mukai, S. Finite groups of automorphisms and the Mathieu group. \emph{Inventiones Mathematicae} 94, 1988.

\bibitem[N01]{NarShiga}
Narumiya, N. and Shiga, H. The mirror map for a family of $K3$ surfaces induced from the simplest 3-dimensional reflexive polytope. \emph{Proceedings on Moonshine and related topics}, American Mathematical Society, Providence, RI, 2001. 
 
\bibitem[N80a]{Nikulin}
Nikulin, V. Finite automorphism groups of K\"{a}hler K3 surfaces. \emph{Transactions of the Moscow Mathematical Society} 38, 1980.

\bibitem[N80b]{NikForms}
{Nikulin, V.}  Integral symmetric bilinear forms and some of their geometric applications.  \emph{Math USSR-Izv.} 14, no. 1, 1980.
              
\bibitem[O03]{Oguiso}
Oguiso, K. A characterization of the Fermat quartic K3 surface by means of finite symmetries. arXiv:math.AG/0308062 v1, 2003.

\bibitem[OS99]{OnSert}
{\"{O}}nsiper, H. and Sert{\"{o}}z, S.  Generalized {Shioda-Inose} Structures on {K3} Surfaces.
\emph{Manuscripta Mathematica} 98, 1999.

\bibitem[OZ02]{OZ}
Oguiso, K. and Zhang, D.-Q.  The simple group of order 168 and {$K3$} surfaces.  \emph{Complex geometry ({G}\"ottingen, 2000)}.  Berlin, Springer: 2002.


\bibitem[SAGE]{SAGE}
{{SAGE} Mathematics Software, Version 3.4}, {\url{http://www.sagemath.org/}}

\bibitem[SZ01]{ShiZhang}
Shimada, I. and Zhang, D.-Q. Classification of extremal elliptic {$K3$} surfaces and
              fundamental groups of open {$K3$} surfaces. \emph{Nagoya Mathematical Journal} 161, 2001.

\bibitem[S07]{Smith}
Smith, J.P. Picard-Fuchs Differential Equations for
Families of K3 Surfaces. University of Warwick, 2006;	arXiv:0705.3658v1, 2007.

\bibitem[T80]{Todorov}
Todorov, A.N. Applications of the Kahler-Einstein Calabi-Yau metric to moduli of K3 surfaces. \emph{Inventiones
Mathematicae} 61, 1980.


\bibitem[X96]{Xiao}
Xiao, G. Galois covers between K3 surfaces. \emph{Annales de l'Institut Fourier} 46, no. 1, 1996.

\end{thebibliography}
\end{document}